\documentclass[12pt,reqno]{article}

\usepackage[usenames]{color}
\usepackage{amssymb}
\usepackage{graphicx}
\usepackage{amscd}

\usepackage[colorlinks=true,
linkcolor=webgreen,
filecolor=webbrown,
citecolor=webgreen]{hyperref}

\definecolor{webgreen}{rgb}{0,.5,0}
\definecolor{webbrown}{rgb}{.6,0,0}

\usepackage{color}
\usepackage{fullpage}
\usepackage{float}

\usepackage{graphics,amsmath,amssymb}
\usepackage{amsthm}
\usepackage{amsfonts}
\usepackage{latexsym}

\DeclareMathOperator{\Li}{Li}
\DeclareMathOperator{\arctanh}{arctanh}

\begin{document}

\begin{center}
\end{center}

\theoremstyle{plain}
\newtheorem{theorem}{Theorem}
\newtheorem{corollary}[theorem]{Corollary}
\newtheorem{lemma}[theorem]{Lemma}
\newtheorem{proposition}[theorem]{Proposition}

\theoremstyle{remark}
\newtheorem{remark}[theorem]{Remark}

\newcommand{\lrf}[1]{\left\lfloor #1\right\rfloor}
\newcommand{\blue}[1]{{\color{blue}#1}}
\newcommand{\red}[1]{{\color{red}#1}}

\begin{center}
{\large\bf On the Euler transform and the floor function}

\vskip 1cm

{\large
Robert Frontczak \\ Independent Researcher,\\ 72764 Reutlingen, Germany \\
\href{mailto:robert.frontczak@web.de}{\tt robert.frontczak@web.de}
}
\end{center}

\vskip .2 in

\begin{abstract}
Let $(a_n)_{n\geq 0}$ be a sequence of numbers. We derive an expression for the Euler transform of a general binomial sum involving $a_n$
and weighted by the floor function. To demonstrate the usefulness of our approach, several examples are discussed. We rediscover some known identities and prove several new. In particular, we derive some new identities for weighted binomial sums involving harmonic numbers and central binomial coefficients. We also present new closed-forms for weighted series involving the Riemann zeta function.
\end{abstract}

\section{Motivation}

Let $n\geq 0$ be a nonnegative integer. Given a sequence of complex numbers $(a_{n})_{n\geq 0}$, its binomial transform 
$(b_{n})_{n\geq 0}$ is the sequence defined by
$$b_{n} = \sum _{k=0}^{n} \binom{n}{k} a_{k}, \quad\mbox{with inversion}\quad a_{n} = \sum_{k= 0}^{n} \binom{n}{k} (-1)^{n-k} b_{k},$$
\noindent or, in the symmetric version
$$b_{n} = \sum_{k=0}^{n} \binom{n}{k} (-1)^{k} a_{k} \quad\mbox{with inversion}\quad a_{n} = \sum_{k=0}^{n} \binom{n}{k} (-1)^{k}b_{k}.$$
\noindent For more information about binomial transforms consult the book by Boyadzhiev \cite{Boyadzhiev} or the articles by 
Adegoke \cite{Adegoke}, Chen \cite{Chen}, Gould \cite{Gould1}, Gould and Quaintance \cite{Gould2}, Prodinger \cite{Prodinger}, 
Spivey \cite{Spivey} or the Sun brothers \cite{Sun1,Sun2,Sun3}. \\

For the computation of binomial transforms a very efficient tool is Euler’s transformation formula for series (or Euler's transform) 
(\cite{Boyadzhiev2,Boyadzhiev3,Boyadzhiev4,Frontczak,Sondow}): Given a function $f(z)$ holomorphic in a neighborhood of $z=0$
\begin{equation*}
f(z) = \sum_{n= 0}^\infty a_{n} z^{n},
\end{equation*}
then we have the representation
\begin{equation}\label{Euler}
\sum_{n= 0}^\infty z^{n} \left (\sum_{k=0}^{n}\binom{n}{k} a_{k} \right ) = \frac{1}{1-z} f\left (\frac{z}{1-z} \right ).
\end{equation}
With the substitution $t=z/(1-z)$, \eqref{Euler} takes the form
\begin{equation*}
f(t) = \frac{1}{1+t} \sum_{n=0}^\infty \left (\frac{t}{1+t}\right )^n \left (\sum_{k=0}^{n}\binom{n}{k} a_{k} \right ),
\end{equation*}
which is sometimes also useful. For instance, if $f(1)$ exists, then
\begin{equation}\label{Series_rel}
\sum_{n=0}^\infty a_n = \sum_{n=0}^\infty \frac{1}{2^{n+1}} \left (\sum_{k=0}^{n}\binom{n}{k} a_{k} \right ),
\end{equation}
and sometimes the new series on the left converges faster. See examples in Knopp's book~\cite{Knopp}. \\
The representation \eqref{Euler} can be extended to its general form
\begin{equation}\label{Euler_transform}
\sum_{n= 0}^\infty z^{n}  \left ( \sum_{k=0}^{n} \binom{n}{k} \mu^{k} \lambda^{n-k} a_{k} \right ) 
= \frac{1}{1-\lambda z} f\left ( \frac{\mu z}{1-\lambda z} \right ),
\end{equation}
where $\lambda,\mu $ are appropriate parameters. \\

In this article we study sums of the form
$$S_n(\mu,\lambda, a_n) = \sum_{k=0}^{n} \binom{n}{k} \lrf{\frac{k}{2}} \mu^{k} \lambda^{n-k} a_{k},$$
where $\lrf{x}$ is the floor function and $(a_n)_{n\geq 0}$ is an arbitrary sequence of numbers. 
We prove a Euler-type transformation formula for all series associated with $S_n(\mu,\lambda,a_n)$. 
Several examples will show the main results at work. In particular, we will focus on prominent number sequences
like harmonic numbers and central binomial coefficients. We will also present some seemingly new closed-forms for 
series involving the Riemann zeta function, two particular examples being
\begin{equation*}
\sum_{k=2}^{\infty} \lrf{\frac{k}{2}} \frac{\zeta(k)}{k\,2^k} = \frac{\ln(2)+\gamma}{4},
\end{equation*}
and
\begin{equation*}
\sum_{k=2}^{\infty} \lrf{\frac{k}{2}} (-1)^k \frac{\zeta(k)}{2^k} = \frac{\pi^2-4}{16},
\end{equation*}
where $\gamma$ is the Euler-Mascheroni constant.

\section{Main Results}

\begin{theorem}\label{main_thm_Euler}
Let $f(z)$ be a power series given by
\begin{equation*}
f(z) = \sum_{n= 0}^\infty a_{n} z^{n},
\end{equation*}
and holomorphic in a neighborhood of $z=0$. Then we have the representation
\begin{align}\label{main_thm}
&\sum_{n= 0}^\infty z^{n} \left (\sum_{k=0}^{n}\binom{n}{k} \lrf{\frac{k}{2}} \mu^{k} \lambda^{n-k} a_{k} \right ) \nonumber \\
&\qquad = \frac{\mu z}{2(1-\lambda z)^2} f'\left (\frac{\mu z}{1-\lambda z} \right ) 
- \frac{1}{4} \frac{1}{1-\lambda z} \left ( f\left (\frac{\mu z}{1-\lambda z} \right ) - f\left (\frac{-\mu z}{1-\lambda z} \right )\right ).
\end{align}
\end{theorem}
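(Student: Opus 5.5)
The plan is to reduce the statement to the general Euler transform \eqref{Euler_transform}, applied to a modified sequence. The key identity is
\[
\lrf{\frac{k}{2}} = \frac{k}{2} - \frac{1-(-1)^k}{4}, \qquad k\geq 0,
\]
which we check separately for even and odd $k$. Substituting it splits the inner sum $\sum_{k}\binom{n}{k}\lrf{k/2}\mu^k\lambda^{n-k}a_k$ into three binomial sums, one for each of the sequences
\[
b_k = \tfrac{k}{2}a_k,\qquad c_k = -\tfrac14 a_k,\qquad d_k = \tfrac14(-1)^k a_k .
\]
Each of these is again the coefficient sequence of a function holomorphic near $0$:
\[
\sum_k b_k z^k = \tfrac{z}{2}f'(z),\qquad \sum_k c_k z^k = -\tfrac14 f(z),\qquad \sum_k d_k z^k = \tfrac14 f(-z).
\]

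Next, apply \eqref{Euler_transform} to each of the three generating functions, with $w=\mu z/(1-\lambda z)$, and add the results. This is justified because the transform is linear in the sequence $(a_k)$.

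The $c$ and $d$ parts give
\[
-\tfrac14\,\frac{1}{1-\lambda z}\bigl(f(w)-f(-w)\bigr).
\]
The $b$ part gives
\[
\frac{1}{1-\lambda z}\cdot \frac{w}{2}\,f'(w) = \frac{\mu z}{2(1-\lambda z)^2}\,f'\!\left(\frac{\mu z}{1-\lambda z}\right).
\]
Together these are exactly the right-hand side of \eqref{main_thm}.

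The only real point of care is convergence and holomorphy. If $f$ has radius of convergence $R>0$, then $f'$ and $f(-z)$ also have radius $R$. For $|z|$ small enough that $|\mu z/(1-\lambda z)|<R$, the double series converge absolutely, so the rearrangement implicit in \eqref{Euler_transform} is valid. I would state the identity for such $z$, as a formal power series identity or equivalently in a neighbourhood of $z=0$, in the same sense as \eqref{Euler_transform}.

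There is no serious obstacle. The step most worth double-checking is that the $b_k$ term produces the factor $(1-\lambda z)^{-2}$, and that the sign conventions for the parity correction $(1-(-1)^k)/4$ are right. A check at small $n$ settles both: for $n=2$ the coefficient of $z^2$ should be $\mu^2 a_2$.
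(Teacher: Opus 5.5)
Your proof is correct, and it takes a different route from the paper's.

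\textbf{The paper's route.} It works from first principles. It writes $a_k$ via Cauchy's integral formula and evaluates the finite kernel $\sum_{k=0}^{n}\binom{n}{k}\lrf{k/2}\lambda^{-k}$ in closed form. It then sums the resulting series in $n$ under the integral and identifies the three terms through Cauchy's formula for $f$ and $f'$. This is done only for $\mu=\lambda=1$, which gives \eqref{Euler_sc}. The general case follows by applying \eqref{Euler_sc} to $g(z)=f(\mu z/\lambda)$ and replacing $z$ by $\lambda z$.

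\textbf{How your route relates.} The closed form for the kernel that the paper calls ``not difficult to show'' is exactly your identity $\lrf{k/2}=k/2-(1-(-1)^k)/4$ applied to $\sum_k\binom{n}{k}x^k=(1+x)^n$. You apply the same decomposition one level up, to the sequence $(a_k)$ itself. After that you need only linearity of the general transform \eqref{Euler_transform}, which the paper states as known.

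\textbf{What each approach buys.} Your version is shorter and avoids contour integrals. It handles arbitrary $\mu,\lambda$ in one step, including $\lambda=0$, where the paper's rescaling by $\mu/\lambda$ is not literally available. It also extends immediately to weights such as $\lrf{k/m}$: the function $k \bmod m$ is a linear combination of $\omega^k$ over the $m$-th roots of unity $\omega$, and $\sum_k\omega^k a_k z^k=f(\omega z)$. The paper's version, in exchange, is self-contained and does not use \eqref{Euler_transform} as a black box.

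\textbf{A small correction to your convergence remark.} Absolute convergence of the rearrangement $\sum_k a_k\mu^k z^k\sum_{n\ge k}\binom{n}{k}(\lambda z)^{n-k}$ is guaranteed by $|\lambda z|<1$ together with $|\mu z|/(1-|\lambda z|)<R$. This is slightly stronger than your condition $|\mu z/(1-\lambda z)|<R$. Both hold for $|z|$ small enough, so the argument is unaffected.
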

\begin{proof}
We start with the special case $\mu=\lambda=1$ in Theorem \ref{main_thm_Euler}. 
Let $C$ be a positively oriented circle inside the domain of $f$ with radius $R$ and centered at the origin. 
Let $|z|<R$ be such that for $\lambda\in\mathbb{C}$ we have $|z| < |\lambda - z|$. For every $n\ge 0$ we have from Cauchy's integral formula
\begin{equation*}
a_{n} = \frac{1}{2\pi i} \oint_{C} \frac{f(\lambda) d\lambda}{\lambda^{n+1}}  
\end{equation*} 
and therefore,
\begin{equation*}
\sum_{k=0}^{n} \binom{n}{k} \lrf{\frac{k}{2}} a_{k} = \frac{1}{2\pi i} \oint_{C} \left (\sum_{k=0}^{n} \binom{n}{k} \lrf{\frac{k}{2}} \frac{1}{\lambda^k} \right ) \frac{f(\lambda)}{\lambda}\,d\lambda.
\end{equation*}
But it is not difficult to show that
$$\sum_{k=0}^{n} \binom{n}{k} \lrf{\frac{k}{2}} \frac{1}{\lambda^k} = \frac{n}{2} \frac{1}{\lambda}\left (1+\frac{1}{\lambda}\right )^{n-1}
- \frac{1}{4} \left ( \left (1+\frac{1}{\lambda}\right )^{n} - \left (1-\frac{1}{\lambda}\right )^{n} \right ).$$
This gives
\begin{align*}
&\sum_{n=0}^\infty z^{n} \left ( \sum_{k=0}^{n} \binom{n}{k} \lrf{\frac{k}{2}} a_{k} \right ) \nonumber \\
&= \frac{1}{2\pi i} \oint_{C} \left ( \sum_{n=0}^\infty z^{n} \left ( \frac{n}{2} \frac{1}{\lambda}\left (1+\frac{1}{\lambda}\right )^{n-1}
- \frac{1}{4}\left (1+\frac{1}{\lambda}\right )^{n} + \frac{1}{4} \left (1-\frac{1}{\lambda}\right )^{n} \right ) \right ) \frac{f(\lambda)}{\lambda}\,d\lambda \nonumber \\
&= \frac{1}{2\pi i} \oint_{C} \left ( \frac{z}{2\lambda} \sum_{n=0}^\infty n \left (1+\frac{1}{\lambda}\right )^{n-1} z^{n-1} 
- \frac{1}{4} \frac{1}{1-z(1+1/\lambda)} + \frac{1}{4} \frac{1}{1-z(1-1/\lambda)}\right )  \frac{f(\lambda)}{\lambda}\,d\lambda \nonumber \\
&= \frac{1}{2\pi i} \oint_{C} \left ( \frac{z\lambda}{2(1-z)^2\left (\lambda-\frac{z}{1-z}\right )^2} 
- \frac{1}{4} \frac{\lambda}{(1-z)\left (\lambda-\frac{z}{1-z}\right )} + \frac{1}{4} \frac{\lambda}{(1-z)\left (\lambda-\frac{-z}{1-z}\right )}
\right ) \frac{f(\lambda)}{\lambda}\,d\lambda 
\end{align*}
and finally
\begin{align}\label{Euler_sc}
&\sum_{n=0}^\infty z^{n} \left ( \sum_{k=0}^{n} \binom{n}{k} \lrf{\frac{k}{2}} a_{k} \right ) \nonumber \\
&\qquad\qquad = \frac{z}{2(1-z)^2} f'\left (\frac{z}{1-z} \right ) 
- \frac{1}{4} \frac{1}{1-z} f\left (\frac{z}{1-z} \right ) + \frac{1}{4} \frac{1}{1-z} f\left (\frac{-z}{1-z} \right ),
\end{align}
where in the last step we used Cauchy's integral formula for derivatives
$$f^{(n)}(z) = \frac{n!}{2\pi i} \oint_{C} \frac{f(\lambda)}{(\lambda-z)^{n+1}}\, d\lambda.$$
This proves the special case. The general transformation formula is obtained by applying the transformation \eqref{Euler_sc} 
to the function $g(z)=f(\mu/\lambda\,z)$. This produces
\begin{align*}
&\sum_{n=0}^\infty z^{n} \left ( \sum_{k=0}^{n} \binom{n}{k} \lrf{\frac{k}{2}} \left (\frac{\mu}{\lambda} \right )^k a_{k} \right ) \nonumber \\
&\qquad = \frac{z}{2(1-z)^2} f'\left (\frac{\mu}{\lambda} \frac{z}{1-z} \right ) \frac{\mu}{\lambda}
- \frac{1}{4} \frac{1}{1-z} f\left ( \frac{\mu}{\lambda} \frac{z}{1-z} \right ) 
+ \frac{1}{4} \frac{1}{1-z} f\left ( \frac{\mu}{\lambda} \frac{-z}{1-z} \right ),
\end{align*}
and replacing $z$ by $\lambda z$ completes the proof.
\end{proof}

The next corollary contains an Euler-type series relation analogous to \eqref{Series_rel}.
\begin{corollary}\label{Euler_rel}
If $f(1)$, $f(-1)$ and $f'(1)$ exist, then
\begin{equation}\label{Euler2}
\sum_{k=1}^{\infty} \lrf{\frac{k}{2}} a_k = \frac{1}{2} \left ( f'(1) - \sum_{n=0}^\infty a_{2n+1} \right ).
\end{equation}
\end{corollary}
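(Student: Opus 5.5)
The plan is to specialize Theorem~\ref{main_thm_Euler} to $\mu=\lambda=1$, that is, to use \eqref{Euler_sc}, and then set $z=1/2$. This mirrors how \eqref{Series_rel} comes from \eqref{Euler}. With $z=1/2$ we have $z/(1-z)=1$, $1/(1-z)=2$ and $z/(2(1-z)^2)=1$, so the right-hand side of \eqref{Euler_sc} becomes
\begin{equation*}
f'(1) - \frac{1}{2}\bigl(f(1)-f(-1)\bigr).
\end{equation*}
Since $f(1)-f(-1)=2\sum_{n\ge0}a_{2n+1}$, this equals $f'(1)-\sum_{n\ge 0}a_{2n+1}$.

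The left-hand side at $z=1/2$ is $\sum_{n\ge0}2^{-n}\sum_{k}\binom{n}{k}\lrf{k/2}a_k$. I would rewrite it directly by interchanging the order of summation and using $\sum_{n\ge k}\binom{n}{k}2^{-n}=2$. This gives $2\sum_{k}\lrf{k/2}a_k$, and dividing by $2$ yields \eqref{Euler2}. The $k=0$ term drops out because $\lrf{0}=0$.

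As a consistency check, the unweighted identity \eqref{Series_rel} encodes the same interchange: $\sum_n 2^{-n-1}\sum_k\binom nk a_k=\sum_k a_k$. Applying \eqref{Series_rel} to the sequence $\lrf{k/2}a_k$ in place of $a_k$ gives the left side as $2\sum_k\lrf{k/2}a_k$ directly, provided that series converges.

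The main obstacle is justification. The point $z=1/2$ maps to $t=1$, which may lie on the boundary of the disc of convergence. So evaluating \eqref{Euler_sc} at $z=1/2$ needs an Abelian argument: let $z\to 1/2^-$ along the real axis, use the assumed existence of $f(1)$, $f(-1)$ and $f'(1)$ (interpreted as convergent series, so Abel's theorem applies to $f$ and $f'$ at $\pm1$), and use Abel's theorem on the left as well. Alternatively, I would follow the paper's formal spirit and assume absolute convergence so that the interchange of summations is legitimate. I would state the result under the hypothesis that all the series involved converge, with the limit taken radially.
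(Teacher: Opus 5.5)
Your proposal is correct and follows the paper's proof. The paper substitutes $z=t/(1+t)$ into \eqref{Euler_sc} and sets $t=1$, which is the same as your $z=1/2$. It then uses $f(1)-f(-1)=2\sum_{n\ge0}a_{2n+1}$ and interchanges the summations via $\sum_{n\ge0}\binom{n+k}{k}2^{-(n+k)}=2$, exactly as you do. The paper treats the evaluation at the boundary point formally, so your Abelian justification is extra care rather than a different route.
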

\begin{proof}
We work with the transformation \eqref{Euler_sc}, replace $z$ by $t/(1+t)$, and simplify to get
$$\frac{f(t)-f(-t)}{2} = t\,f'(t) - \frac{2}{1+t} \sum_{n=0}^\infty \left (\frac{t}{1+t}\right )^n \left ( \sum_{k=0}^{n} \binom{n}{k} \lrf{\frac{k}{2}} a_k \right ).$$
Setting $t=1$, while keeping in mind that
$$\frac{f(1)-f(-1)}{2} = \sum_{n=0}^\infty a_{2n+1},$$
we get
\begin{equation*}
\sum_{n=0}^\infty a_{2n+1} = f'(1) - \sum_{n=0}^\infty \frac{1}{2^n} \sum_{k=0}^{n} \binom{n}{k} \lrf{\frac{k}{2}} a_k.
\end{equation*}
But
\begin{align*}
\sum_{n=0}^\infty \frac{1}{2^n} \sum_{k=0}^{n} \binom{n}{k} \lrf{\frac{k}{2}} a_k 
&= \sum_{n=1}^\infty \frac{1}{2^n} \sum_{k=1}^{n} \binom{n}{k} \lrf{\frac{k}{2}} a_k \\
&= \sum_{k=1}^\infty \lrf{\frac{k}{2}} a_k \sum_{n=k}^\infty \frac{1}{2^n} \binom{n}{k} \\
&= \sum_{k=1}^\infty \lrf{\frac{k}{2}} a_k \sum_{n=0}^\infty \frac{1}{2^{n+k}} \binom{n+k}{k} \\
&= 2 \sum_{k=1}^\infty \lrf{\frac{k}{2}} a_k,
\end{align*} 
where in the final step we have used the known binomial expansion
$$\sum_{n=0}^\infty \binom{n+k}{k} z^n = \frac{1}{(1-z)^{k+1}}, \qquad |z|<1.$$
\end{proof}

Theorem \ref{main_thm_Euler} and Corollary \ref{Euler_rel} are useful tools as will be shown  in the following two applications below. 
We begin with a particular application of \eqref{Euler2} and consider for $m\geq 3$ the sequence $a_n=a_n(m)$ given by $a_0=0$ 
and for $n\geq 1, a_n=1/n^m$. This choice gives 
$$f(z) = \Li_m(z) \quad\text{and}\quad f'(z)=\frac{\Li_{m-1}(z)}{z}, \quad |z|\leq 1,$$
where $\Li_m(z)$ denotes the polylogarithm \cite{Lewin}. We have $f(1)=\zeta(m)$ and $f'(1)=\zeta(m-1)$, where 
$\zeta(z)=\sum_{n=1}^\infty 1/n^z,\Re(z)>1,$ is the Riemann zeta function. As
$$\sum_{n=0}^\infty \frac{1}{(2n+1)^m} = (1-2^{-m}) \zeta(m),$$ 
we get the final result valid for all $m\geq 3$
\begin{equation}\label{gap}
\sum_{k=1}^\infty \lrf{\frac{k}{2}} \frac{1}{k^m} = \frac{1}{2} \left ( \zeta(m-1) - (1-2^{-m})\zeta(m) \right ).
\end{equation}
The particular case $m=3$ becomes
$$\sum_{k=1}^\infty \lrf{\frac{k}{2}} \frac{1}{k^3} = \frac{\pi^2}{12} - \frac{7}{16} \zeta(3).$$

\begin{remark}
The series in \eqref{gap} is known. It appeared recently in \cite[Corollary 1]{Somu}, stated there in the equivalent form
\begin{equation}
\sum_{n=1}^\infty \left ( \frac{1}{n^m} + \frac{1}{(n+2)^m} + \frac{1}{(n+4)^m} + \cdots \right ) 
= \frac{1}{2} \left ( \zeta(m-1) - \frac{(2^m-1)}{2^m} \zeta(m) \right ).
\end{equation}
The proof, however, is longer and more intricate, based on properties of the psi and Hurwitz zeta function. The special case for $m=3$
also appeared in \cite[Problem 2.50, part (c)]{Furdui}.
\end{remark}

We conclude this section with the most basic application of Theorem \ref{main_thm_Euler}. Let $a_n=1$ for all $n$. Then,
$$f(z) = \frac{1}{1-z}, \qquad f'(z) = \frac{1}{(1-z)^2},$$
and
$$f\left (\frac{\mu z}{1-\lambda z} \right ) = \frac{1-\lambda z}{1-(\mu+\lambda)z}, \, 
f\left (\frac{-\mu z}{1-\lambda z} \right ) = \frac{1-\lambda z}{1+(\mu-\lambda)z},\,
f'\left (\frac{\mu z}{1-\lambda z} \right ) = \frac{(1-\lambda z)^2}{(1-(\mu+\lambda)z)^2}.$$
Hence,
\begin{align*}
&\sum_{n= 0}^\infty z^{n} \left (\sum_{k=0}^{n}\binom{n}{k} \lrf{\frac{k}{2}} \mu^{k} \lambda^{n-k} \right ) \nonumber \\
&\quad = \frac{1}{4} \left ( \frac{2\mu z}{(1-(\mu+\lambda)z)^2} - \frac{1}{1-(\mu+\lambda)z} + \frac{1}{1+(\mu-\lambda)z}\right ) \nonumber \\
&\quad = \frac{1}{4} \left ( \frac{2\mu z}{(\mu+\lambda)}\left (\frac{1}{1-(\mu+\lambda)z}\right )' - \frac{1}{1-(\mu+\lambda)z} + \frac{1}{1+(\mu-\lambda)z}\right ) \nonumber \\
&\quad = \frac{1}{4} \left (\frac{2\mu}{(\mu+\lambda)}\sum_{n=0}^\infty n(\mu+\lambda)^n\,z^n - \sum_{n=0}^\infty (\mu+\lambda)^n z^n + 
\sum_{n=0}^\infty (\mu-\lambda)^n (-1)^n z^n \right ),
\end{align*}
and by extracting the coefficients we get the identity
\begin{equation}\label{Ex1}
\sum_{k=0}^{n}\binom{n}{k} \lrf{\frac{k}{2}} \mu^{k} \lambda^{n-k} 
= \frac{\mu n}{2}(\lambda + \mu)^{n-1} - \frac{1}{4}\left ((\lambda+\mu)^n - (\lambda-\mu)^n \right ).
\end{equation}

\begin{remark}
Identity \eqref{Ex1} is Theorem 3.1 in \cite{Adegoke2}.
\end{remark}

\section{Further applications}

We continue with a few additional applications of Theorem \ref{main_thm_Euler} and Corollary \ref{Euler_rel},
hereby stating the major findings as propositions. Where necessary we use the conventions $0^0=1$
and $\sum_{k=m}^n s_k=0$ if $n<m$.

\subsection{Applications involving harmonic numbers}

In this section, we work with the sequence $a_n = H_n$, where $(H_n)_{n\geq 0}$ are harmonic numbers defined by $H_0 = 0$ and for $n\geq 1:$
$$H_n = \sum_{k=1}^n \frac{1}{k} = H_{n-1} + \frac{1}{n}.$$
They have the following integral form
$$H_n = \int_0^1 \frac{1-x^n}{1-x} dx,$$
and their ordinary generating function $f(z)$ is
\begin{equation}\label{har_gf}
f(z) = \sum_{n=0}^\infty H_n z^n = - \frac{\ln(1-z)}{1-z}.
\end{equation}

\begin{proposition}
Let $\mu$ and $\lambda$ be two complex parameters. Then we have for all $n\geq 1$
\begin{align}\label{bin_fl_har}
\sum_{k=0}^{n} \binom{n}{k} \lrf{\frac{k}{2}} \mu^{k} \lambda^{n-k} H_{k} &= \frac{n\mu}{2} (\lambda + \mu)^{n-1} H_n 
- \frac{1}{4} \left ( (\lambda+\mu)^n - (\lambda-\mu)^n \right ) H_n \nonumber \\
&\quad - \frac{\mu}{4} \sum_{k=0}^{n-1} \left ( (\lambda+\mu)^k - (\lambda-\mu)^k \right ) \lambda^{n-1-k} H_{n-1-k} \nonumber \\
&\quad - \frac{\mu^2}{2} \sum_{k=0}^{n-2} (k+1) (\lambda+\mu)^k \lambda^{n-2-k} H_{n-2-k}.
\end{align}
\end{proposition}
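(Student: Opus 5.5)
We apply Theorem \ref{main_thm_Euler} with $f(z)=-\ln(1-z)/(1-z)$ from \eqref{har_gf} and then read off the coefficient of $z^n$ on both sides. Put $w=\mu z/(1-\lambda z)$. Then
$$1-w=\frac{1-(\lambda+\mu)z}{1-\lambda z},\qquad 1+w=\frac{1-(\lambda-\mu)z}{1-\lambda z},$$
and $f'(z)=\dfrac{1-\ln(1-z)}{(1-z)^2}$. Substituting, the three pieces become
\begin{align*}
\frac{\mu z}{2(1-\lambda z)^2}f'(w) &= \frac{\mu z}{2(1-(\lambda+\mu)z)^2}\Bigl(1-\ln\tfrac{1-(\lambda+\mu)z}{1-\lambda z}\Bigr),\\
\frac{1}{1-\lambda z}f(\pm w) &= -\frac{1}{1-(\lambda\pm\mu)z}\,\ln\tfrac{1-(\lambda\pm\mu)z}{1-\lambda z}.
\end{align*}
Next we split each logarithm as $\ln(1-(\lambda\pm\mu)z)-\ln(1-\lambda z)$. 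The terms $-\ln(1-cz)/(1-cz)=\sum_n H_n c^n z^n$ then produce the harmonic numbers $H_n$ in the leading terms, and what remains are products of $\ln(1-\lambda z)$ with rational functions.

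The key step is to rewrite these remainders as convolutions with $H_m\lambda^m$, so that they match the two sums in \eqref{bin_fl_har}. We use the factorisation
$$\frac{-\ln(1-\lambda z)}{1-\lambda z}\cdot\frac{1-\lambda z}{1-cz}=\Bigl(\sum_m H_m\lambda^m z^m\Bigr)\frac{1-\lambda z}{1-cz},$$
together with the identity
$$\frac{1-\lambda z}{1-cz}=1+\frac{(c-\lambda)z}{1-cz}.$$
For $c=\lambda\pm\mu$ we have $c-\lambda=\pm\mu$, and this is exactly where a factor of $\mu$ and a shift of index by one come from. The difference of the $c=\lambda+\mu$ and $c=\lambda-\mu$ contributions then yields the middle sum $\frac{\mu}{4}\sum_k\bigl((\lambda+\mu)^k-(\lambda-\mu)^k\bigr)\lambda^{n-1-k}H_{n-1-k}$. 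The constant "$1$" parts of the two identities are both multiplied by $\ln(1-\lambda z)/(1-\lambda z)$ and cancel in the difference.

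The derivative term is handled the same way. First, $\mu z/(2(1-(\lambda+\mu)z)^2)$ multiplied by the bracket gives $\frac{\mu z}{2(1-(\lambda+\mu)z)^2}\bigl(1-\ln(1-(\lambda+\mu)z)\bigr)$ plus a term in $\ln(1-\lambda z)$. Writing $(1-cz)^{-2}(1-\ln(1-cz))$ as the derivative of $-\ln(1-cz)/(1-cz)$ divided by $c$ gives coefficient $\frac{n\mu}{2}(\lambda+\mu)^{n-1}H_n$. The $\ln(1-\lambda z)$ piece is
$$\frac{\mu z}{2(1-cz)^2}\ln(1-\lambda z)=-\frac{\mu z}{2}\cdot\frac{1-\lambda z}{(1-cz)^2}\sum_m H_m\lambda^m z^m.$$
Using $\frac{1-\lambda z}{(1-cz)^2}=\frac{1}{1-cz}+\frac{\mu z}{(1-cz)^2}$, the $\frac1{1-cz}$ part cancels against the leftover pieces from the $f(w)$ term. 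The $\mu z/(1-cz)^2=\mu z\sum_k(k+1)c^kz^k$ part produces $-\frac{\mu^2}{2}\sum_k(k+1)(\lambda+\mu)^k\lambda^{n-2-k}H_{n-2-k}$.

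The main obstacle is the bookkeeping: checking that all stray $\frac{1}{1-cz}$ and $\ln(1-\lambda z)$ contributions cancel exactly, including the leftover $\frac{\mu z}{2(1-cz)^2}$ term without a logarithm. That term must be absorbed into the $H_n$ coefficient via the identity $\frac{d}{dz}\bigl[-\ln(1-cz)/(1-cz)\bigr]=c\,(1-\ln(1-cz))/(1-cz)^2$. I would verify the final identity by checking $n=1,2,3$ directly.
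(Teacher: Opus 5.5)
Your route is the paper's: apply Theorem~\ref{main_thm_Euler} to $f(z)=-\ln(1-z)/(1-z)$, split the logarithms, write $\ln(1-\lambda z)=-(1-\lambda z)L(z)$ with $L(z)=\sum_m H_m\lambda^m z^m$, and use Cauchy products. Your treatment of the $H_n$ terms and of the $\mu^2$-sum is correct. The error is in the one step that carries the content, namely where the middle sum comes from. Put $c=\lambda+\mu$ and $d=\lambda-\mu$. The $\ln(1-\lambda z)$ remainders coming from $-\frac14\cdot\frac{1}{1-\lambda z}\bigl(f(w)-f(-w)\bigr)$ are
\[
\frac14\Bigl(\frac{1-\lambda z}{1-cz}-\frac{1-\lambda z}{1-dz}\Bigr)L=\frac{\mu z}{4}\Bigl(\frac{1}{1-cz}+\frac{1}{1-dz}\Bigr)L .
\]
Here $c-\lambda=\mu$ but $d-\lambda=-\mu$, so this sign change and the minus sign between $f(w)$ and $f(-w)$ compound into a \emph{plus}. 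Your ``difference'' therefore gives $+\frac{\mu}{4}\sum_k\bigl((\lambda+\mu)^k+(\lambda-\mu)^k\bigr)\lambda^{n-1-k}H_{n-1-k}$. That is not the middle sum of \eqref{bin_fl_har}, which carries $-\frac{\mu}{4}$ and a difference. Conversely, the $\frac{1}{1-cz}$ part of the derivative term, $-\frac{\mu z}{2}\frac{L}{1-cz}$, does not cancel. In your own account there is no leftover $f(w)$ piece for it to cancel against, because you already used that piece for the middle sum.

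What actually happens is that these two contributions add:
\[
\frac{\mu z}{4}\Bigl(\frac{1}{1-cz}+\frac{1}{1-dz}\Bigr)L-\frac{\mu z}{2}\,\frac{L}{1-cz}=-\frac{\mu z}{4}\Bigl(\frac{1}{1-cz}-\frac{1}{1-dz}\Bigr)L .
\]
Only this combination produces $-\frac{\mu}{4}\sum_{k=0}^{n-1}\bigl((\lambda+\mu)^k-(\lambda-\mu)^k\bigr)\lambda^{n-1-k}H_{n-1-k}$. Checking your intermediate claims at $n=3$ shows this concretely:
\begin{itemize}
\item the $f(\pm w)$ remainders alone contribute $\frac{5}{4}\lambda^2\mu$;
\item the derivative piece contributes $-\frac{5}{4}\lambda^2\mu-\frac12\lambda\mu^2$;
\item the middle sum equals $-\frac12\lambda\mu^2$.
\end{itemize}
Checking only the final identity for $n=1,2,3$, as you planned, would pass, since the statement is true. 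It would not validate this bookkeeping.

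With this correction the argument is complete and is essentially the paper's proof. The only other difference is minor: you obtain $\frac{n\mu}{2}(\lambda+\mu)^{n-1}H_n$ by differentiating $-\ln(1-cz)/(1-cz)$. The paper instead uses $\sum_{k=0}^{n}H_k=(n+1)(H_{n+1}-1)$ together with $nH_{n-1}+1=nH_n$.
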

\begin{proof}
Working with \eqref{har_gf} we see that
$$f'(z) = \frac{1}{1-z} \left (\frac{1}{1-z} - \frac{\ln(1-z)}{1-z}\right )$$
and
$$f'\left (\frac{\mu z}{1-\lambda z} \right ) = \frac{(1-\lambda z)^2}{1-(\mu+\lambda)z} \left ( \frac{1}{1-(\mu+\lambda)z} 
- \frac{\ln(1-(\mu+\lambda)z)}{1-(\mu+\lambda)z} + \frac{\ln(1-\lambda z)}{1-(\mu+\lambda)z} \right ).$$
This yields
\begin{align*}
\frac{\mu z}{2(1-\lambda z)^2} f'\left (\frac{\mu z}{1-\lambda z} \right ) &= \frac{\mu z}{2} \frac{1}{1-(\mu+\lambda)z} 
\Bigg (\frac{1}{1-(\mu+\lambda)z} - \frac{\ln(1-(\mu+\lambda)z)}{1-(\mu+\lambda)z} \\
&\qquad + \frac{\mu z}{1-(\mu+\lambda)z} \frac{\ln(1-\lambda z)}{1-\lambda z} + \frac{\ln(1-\lambda z)}{1-\lambda z} \Bigg ),
\end{align*}
and also
\begin{equation*}
\frac{1}{1-\lambda z} f\left (\frac{\mu z}{1-\lambda z} \right ) = - \frac{\ln(1-(\mu+\lambda)z)}{1-(\mu+\lambda)z}
+ \frac{\mu z}{1-(\mu+\lambda)z} \frac{\ln(1-\lambda z)}{1-\lambda z} + \frac{\ln(1-\lambda z)}{1-\lambda z}
\end{equation*}
and
\begin{equation*}
\frac{1}{1-\lambda z} f\left (\frac{-\mu z}{1-\lambda z} \right ) = - \frac{\ln(1-(\lambda-\mu)z)}{1-(\lambda-\mu)z}
- \frac{\mu z}{1-(\lambda-\mu)z} \frac{\ln(1-\lambda z)}{1-\lambda z} + \frac{\ln(1-\lambda z)}{1-\lambda z}.
\end{equation*}
Switching to power series and making use of Cauchy's product formula we can state these relations as

\begin{align*}
\frac{\mu z}{2(1-\lambda z)^2} f'\left (\frac{\mu z}{1-\lambda z} \right ) &= \frac{\mu z}{2}  
\Bigg ( \sum_{n=0}^\infty (n+1)(\lambda+\mu)^n z^n + \left (\sum_{n=0}^\infty (\lambda+\mu)^n z^n\right )\left (\sum_{n=0}^\infty (\lambda+\mu)^n H_n z^n \right ) \\
&\qquad - \mu z \left (\sum_{n=0}^\infty (n+1)(\lambda+\mu)^n z^n \right )\left (\sum_{n=0}^\infty \lambda^n H_n z^n \right ) \\
&\qquad - \left (\sum_{n=0}^\infty (\lambda+\mu)^n z^n \right )\left (\sum_{n=0}^\infty \lambda^n H_n z^n \right ) \Bigg ) \\
&= \frac{\mu}{2} \Bigg ( \sum_{n=1}^\infty n(\lambda+\mu)^{n-1} z^n + \sum_{n=1}^\infty (\lambda+\mu)^{n-1} \sum_{k=0}^{n-1} H_k z^n \\
&\qquad - \sum_{n=1}^\infty \sum_{k=0}^{n-1} (\lambda+\mu)^{k} \lambda^{n-1-k} H_{n-1-k} z^n \Bigg ) \\
&\qquad - \frac{\mu^2}{2} \sum_{n=2}^\infty \sum_{k=0}^{n-2} (k+1) (\lambda+\mu)^{k} \lambda^{n-2-k} H_{n-2-k} z^n,
\end{align*}
and
\begin{equation*}
\frac{1}{1-\lambda z} f\left (\frac{\mu z}{1-\lambda z} \right ) = \sum_{n=0}^\infty \left ((\lambda+\mu)^n - \lambda^n \right ) H_n z^n
- \mu \sum_{n=1}^\infty \sum_{k=0}^{n-1} (\lambda+\mu)^{k} \lambda^{n-1-k} H_{n-1-k} z^n,
\end{equation*}
\begin{equation*}
\frac{1}{1-\lambda z} f\left (\frac{-\mu z}{1-\lambda z} \right ) = \sum_{n=0}^\infty \left ((\lambda-\mu)^n - \lambda^n \right ) H_n z^n
+ \mu \sum_{n=1}^\infty \sum_{k=0}^{n-1} (\lambda-\mu)^{k} \lambda^{n-1-k} H_{n-1-k} z^n.
\end{equation*}
Extracting the coefficients and making use of the standard sum
$$\sum_{k=0}^n H_k = (n+1)(H_{n+1}-1),$$
results in
\begin{align*}
\sum_{k=0}^{n} \binom{n}{k} \lrf{\frac{k}{2}} \mu^{k} \lambda^{n-k} H_{k} &= \frac{n\mu}{2} (\lambda + \mu)^{n-1} + \frac{\mu}{2} (\lambda + \mu)^{n-1}(n H_{n-1}-n+1) \\ 
&\qquad - \frac{1}{4} \left ((\lambda+\mu)^n - (\lambda-\mu)^n \right ) H_n \nonumber \\
&\qquad - \frac{\mu}{4} \sum_{k=0}^{n-1} \left ( (\lambda+\mu)^k - (\lambda-\mu)^k \right ) \lambda^{n-1-k} H_{n-1-k} \nonumber \\
&\qquad - \frac{\mu^2}{2} \sum_{k=0}^{n-2} (k+1) (\lambda+\mu)^k \lambda^{n-2-k} H_{n-2-k},
\end{align*}
which is the stated identity as $n H_{n-1}+1=n H_n$.
\end{proof}

By slightly modifying the proof of identity \eqref{bin_fl_har} we can obtain the following equivalent expression for the sum 
$S_n(\mu,\lambda,H_n)$, which is a Knuth-Boyadzhiev-type identity.

\begin{proposition}
Let $\mu$ and $\lambda$ be two complex parameters. Then we have for all $n\geq 1$
\begin{align}\label{bin_fl_har2}
& \sum_{k=0}^{n} \binom{n}{k} \lrf{\frac{k}{2}} \mu^{k} \lambda^{n-k} H_{k} \nonumber \\
&\quad = \frac{n\mu}{2} (\lambda + \mu)^{n-1} \left (H_n - \sum_{k=1}^{n-1} \frac{1}{k} \left (\frac{\lambda}{\lambda+\mu} \right )^k \right )
- \frac{\lambda}{2} \left (\lambda^{n-1} - (\lambda+\mu)^{n-1} \right ) \nonumber \\
&\qquad - \frac{1}{4} \left ( (\lambda+\mu)^n - (\lambda-\mu)^n \right ) H_n 
+ \frac{1}{4} \sum_{k=1}^{n} \frac{\lambda^k}{k} \left ( (\lambda+\mu)^{n-k} - (\lambda-\mu)^{n-k} \right ).
\end{align}
\end{proposition}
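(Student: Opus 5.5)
We apply Theorem \ref{main_thm_Euler} with $f(z)=-\ln(1-z)/(1-z)$ from \eqref{har_gf}, exactly as in the proof of \eqref{bin_fl_har}. The difference is that we no longer split the logarithms into Cauchy products with $\lambda^{n}H_n$. Instead we keep $\ln(1-\lambda z)$ as a separate factor. Put $w=\mu z/(1-\lambda z)$; then $1-w=(1-(\lambda+\mu)z)/(1-\lambda z)$, so
$$\ln(1-w)=\ln(1-(\lambda+\mu)z)-\ln(1-\lambda z).$$
This gives the three pieces
\begin{align*}
\frac{\mu z}{2(1-\lambda z)^2} f'(w) &= \frac{\mu z}{2(1-(\lambda+\mu)z)^2}\Big(1-\ln(1-(\lambda+\mu)z)+\ln(1-\lambda z)\Big),\\
\frac{1}{1-\lambda z} f(w) &= \frac{-\ln(1-(\lambda+\mu)z)+\ln(1-\lambda z)}{1-(\lambda+\mu)z},\\
\frac{1}{1-\lambda z} f(-w) &= \frac{-\ln(1-(\lambda-\mu)z)+\ln(1-\lambda z)}{1-(\lambda-\mu)z}.
\end{align*}
The third line uses $1+w=(1-(\lambda-\mu)z)/(1-\lambda z)$.

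Next we extract coefficients of $z^n$, using only two expansions: $-\ln(1-cz)/(1-cz)=\sum_{n\ge0} H_n c^n z^n$, and $\ln(1-\lambda z)=-\sum_{k\ge1}\lambda^k z^k/k$.

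\begin{itemize}
\item The $f(w)-f(-w)$ part contributes $-\tfrac14((\lambda+\mu)^n-(\lambda-\mu)^n)H_n$ from the $H_n$ terms. The two $\ln(1-\lambda z)$ terms form a Cauchy product with $1/(1-(\lambda\pm\mu)z)$ and give $-\sum_{k=1}^n \frac{\lambda^k}{k}(\lambda\pm\mu)^{n-k}$. With the prefactor $-\tfrac14$ and the relative sign, this becomes the last sum $+\tfrac14\sum_{k=1}^n\frac{\lambda^k}{k}((\lambda+\mu)^{n-k}-(\lambda-\mu)^{n-k})$.
\item In the derivative part, $1/(1-cz)^2$ expands as $\sum (n+1)c^nz^n$. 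Also
$$-\frac{\ln(1-cz)}{(1-cz)^2}=\sum_n c^n\Big(\sum_{j=0}^n H_j\Big)z^n=\sum_n c^n(n+1)(H_{n+1}-1)z^n,$$
by the standard sum already used above. After the shift by $\mu z$, these two terms combine to $\frac{n\mu}{2}(\lambda+\mu)^{n-1}H_n$, exactly as in the previous proof (via $nH_{n-1}+1=nH_n$).
\item The remaining term of the derivative part, $\frac{\mu z}{2}\ln(1-\lambda z)/(1-(\lambda+\mu)z)^2$, has $z^n$-coefficient
$$-\frac{\mu}{2}\sum_{k=1}^{n-1}\frac{\lambda^k}{k}(n-k)(\lambda+\mu)^{n-1-k}.$$
\end{itemize}

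The main obstacle is matching this last coefficient to the stated form. We split $n-k=n-k$ into the two pieces $n$ and $-k$. The $n$-piece gives $-\frac{n\mu}{2}(\lambda+\mu)^{n-1}\sum_{k=1}^{n-1}\frac1k\big(\frac{\lambda}{\lambda+\mu}\big)^k$, which is the correction inside the first parenthesis. The $-k$-piece cancels the $1/k$ and leaves the geometric sum
$$\frac{\mu}{2}\sum_{k=1}^{n-1}\lambda^k(\lambda+\mu)^{n-1-k}=\frac{\lambda}{2}\big((\lambda+\mu)^{n-1}-\lambda^{n-1}\big),$$
where we use $\mu=(\lambda+\mu)-\lambda$ and telescoping. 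This is precisely $-\frac{\lambda}{2}(\lambda^{n-1}-(\lambda+\mu)^{n-1})$.

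Finally, we check that the summation ranges are consistent. The $k=n$ term in the derivative part vanishes because of the factor $n-k$, which justifies the upper limit $n-1$. We also check the case $n=1$ directly: the left side is $0$, and both sides agree.
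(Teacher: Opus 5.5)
Your proposal is correct and follows essentially the same route as the paper: the same three pieces from Theorem~\ref{main_thm_Euler} with $\ln(1-\lambda z)$ kept separate, leading to the weight $(n-k)$, which you then split into an $n$-piece and a geometric $-k$-piece. The only difference is cosmetic. You expand $\ln(1-\lambda z)/(1-(\lambda+\mu)z)^2$ directly via $\sum_{m\ge 0}(m+1)c^m z^m$, whereas the paper applies Boyadzhiev's lemma for $-\ln(1-\alpha z)/(1-\beta z)$ and then collapses a double sum to reach the same weight.
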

\begin{proof}
The key ingredient for establishing \eqref{bin_fl_har2} is the following result due to Boyadzhiev \cite[Lemma 3.13]{Boyadzhiev}:
For sufficiently small $|z|$ we have
$$-\frac{\ln(1-\alpha z)}{1-\beta z} = \sum_{n=1}^\infty \sum_{k=1}^n \frac{\alpha^k \beta^{n-k}}{k} \, z^n.$$
Making use of this result with $\alpha=\lambda$ and $\beta=\lambda+\mu$, we can restate the expression for $f'(z)$ as
\begin{align*}
\frac{\mu z}{2(1-\lambda z)^2} f'\left (\frac{\mu z}{1-\lambda z} \right ) &= \frac{\mu}{2}  
\Bigg ( \sum_{n=1}^\infty n (\lambda+\mu)^{n-1} z^n + \sum_{n=1}^\infty (\lambda+\mu)^{n-1} \sum_{k=0}^{n-1} H_k z^n \\
&\qquad - \sum_{n=2}^\infty \sum_{k=1}^{n-1} \frac{\lambda^k (\lambda+\mu)^{n-1-k}}{k} z^n  
- \sum_{n=3}^\infty \sum_{j=1}^{n-2} a_j b_{n-1-j} z^n \Bigg ),
\end{align*}
with
$$a_n = (\lambda+\mu)^n \quad\text{and}\quad b_n = \sum_{k=1}^n \frac{\lambda^k (\lambda+\mu)^{n-k}}{k}.$$
We also have the new expressions
\begin{equation*}
\frac{1}{1-\lambda z} f\left (\frac{\mu z}{1-\lambda z} \right ) = \sum_{n=0}^\infty (\lambda+\mu)^n H_n z^n
- \sum_{n=1}^\infty \sum_{k=1}^{n} \frac{\lambda^k (\lambda+\mu)^{n-k}}{k} z^n,
\end{equation*}
and
\begin{equation*}
\frac{1}{1-\lambda z} f\left (\frac{-\mu z}{1-\lambda z} \right ) = \sum_{n=0}^\infty (\lambda-\mu)^n H_n z^n
- \sum_{n=1}^\infty \sum_{k=1}^{n} \frac{\lambda^k (\lambda-\mu)^{n-k}}{k} z^n.
\end{equation*}
Coefficient extraction gives
\begin{align*}
S_n(\mu,\lambda,H_n) &= \frac{n\mu}{2} (\lambda + \mu)^{n-1} H_n - \frac{1}{4} \left ( (\lambda+\mu)^n - (\lambda-\mu)^n \right ) H_n \\
&\quad + \frac{1}{4} \sum_{k=1}^{n} \frac{\lambda^k}{k} \left ( (\lambda+\mu)^{n-k} - (\lambda-\mu)^{n-k} \right ) - \frac{\mu}{2} \sum_{k=1}^{n-1} \frac{\lambda^k (\lambda+\mu)^{n-1-k}}{k} \\
&\quad - \frac{\mu}{2} \sum_{j=1}^{n-2} \sum_{k=1}^{n-1-j} \frac{\lambda^k (\lambda+\mu)^{n-1-k}}{k}.
\end{align*}
But
$$\sum_{j=1}^{n-2} \sum_{k=1}^{n-1-j} \frac{\lambda^k (\lambda+\mu)^{n-1-k}}{k} 
= \sum_{k=1}^{n-2} (n-1-k) \frac{\lambda^k (\lambda+\mu)^{n-1-k}}{k}$$
and we can write
\begin{align*}
& - \frac{\mu}{2} \left ( \sum_{k=1}^{n-1} \frac{\lambda^k (\lambda+\mu)^{n-1-k}}{k} + \sum_{k=1}^{n-2} (n-1-k) \frac{\lambda^k (\lambda+\mu)^{n-1-k}}{k} \right ) \\
&\qquad = - \frac{\mu}{2} \sum_{k=1}^{n-1} (n-k) \frac{\lambda^k (\lambda+\mu)^{n-1-k}}{k} \\
&\qquad = - \frac{\mu}{2} \left ( n \sum_{k=1}^{n-1} \frac{\lambda^k (\lambda+\mu)^{n-1-k}}{k} - \sum_{k=1}^{n-1} \lambda^k (\lambda+\mu)^{n-1-k} \right ) \\
&\qquad = - \frac{\mu n}{2} \sum_{k=1}^{n-1} \frac{\lambda^k (\lambda+\mu)^{n-1-k}}{k} - \frac{\lambda}{2} \left (\lambda^{n-1} - (\lambda+\mu)^{n-1} \right ),
\end{align*}
where in the last step we have used the geometric series identity
$$\sum_{k=1}^{n} a^k b^{n-k} = \frac{a}{a-b} (a^n - b^n).$$
This completes the proof.
\end{proof}

\begin{corollary}
For $n\geq 1$ we have
\begin{equation}
\sum_{k=0}^{n} \binom{n}{k} \lrf{\frac{k}{2}} H_{k} = (n-1)2^{n-2} H_n - \sum_{k=1}^{n-2} 2^{k-1} (H_{n-1-k}+k H_{n-2-k})
\end{equation}
or equivalently
\begin{equation}
\sum_{k=0}^{n} \binom{n}{k} \lrf{\frac{k}{2}} H_{k} = (n-1) 2^{n-2}\left (H_n - \sum_{k=1}^{n-1} \frac{1}{k 2^k}\right ) + \frac{2^{n-1}-1}{2},
\end{equation}
and for $n\geq 2$
\begin{equation}
\sum_{k=0}^{n} \binom{n}{k} \lrf{\frac{k}{2}} (-1)^k H_{k} = 2^{n-2} \left ( H_n -\sum_{k=1}^{n-1} \frac{1}{k 2^k} \right ) + \frac{1}{2(n-1)}.
\end{equation}
\end{corollary}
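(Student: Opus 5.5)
The three identities are specializations of \eqref{bin_fl_har} and \eqref{bin_fl_har2}. For the first two we take $\mu=\lambda=1$, and for the third we take $\mu=-1$, $\lambda=1$. In each case $\lambda-\mu$ or $\lambda+\mu$ vanishes, so most of the terms collapse. We use the convention $0^0=1$ throughout.

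\emph{First identity.} Put $\mu=\lambda=1$ in \eqref{bin_fl_har}. Then $(\lambda+\mu)^k=2^k$, and $(\lambda-\mu)^k=0^k$ equals $1$ for $k=0$ and $0$ otherwise. The first line becomes
\[
\tfrac{n}{2}2^{n-1}H_n-\tfrac14(2^n-0^n)H_n=(n-1)2^{n-2}H_n ,
\]
which holds for $n\ge1$. The $k=0$ term of the first correction sum vanishes, so that sum is $-\frac14\sum_{k=1}^{n-1}2^kH_{n-1-k}$. The second correction sum is $-\frac12\sum_{k=0}^{n-2}(k+1)2^kH_{n-2-k}$. In the second sum we shift $k\to k-1$, which turns it into $-\sum_{k=1}^{n-1}k2^{k-2}H_{n-1-k}$. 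Adding the two sums gives
\[
-\sum_{k=1}^{n-1}2^{k-2}(1+k)H_{n-1-k}.
\]
The $k=n-1$ term carries $H_0=0$ and drops out. The target has the form $\sum 2^{k-1}(H_{n-1-k}+kH_{n-2-k})$, so we regroup using $H_{n-1-k}=H_{n-2-k}+\frac{1}{n-1-k}$, or alternatively re-index one piece. This bookkeeping is the step most likely to need care. If the regrouping does not close, we instead prove that the first form equals the second and derive the second independently.

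\emph{Second identity.} Put $\mu=\lambda=1$ in \eqref{bin_fl_har2}. The leading term is $\frac n2 2^{n-1}\bigl(H_n-\sum_{k=1}^{n-1}\frac{1}{k2^k}\bigr)$, and the next term is $-\frac12(1-2^{n-1})$. The term $-\frac14(2^n-0^n)H_n$ combines with the $H_n$ part of the leading term to give $(n-1)2^{n-2}H_n$. In the last sum, $\frac14\sum_{k=1}^n\frac1k\bigl(2^{n-k}-0^{n-k}\bigr)$, the $k=n$ term contributes $-\frac{1}{4n}$ from the $0^0$ piece, and the remaining part is $2^{n-2}\sum_{k=1}^n\frac{1}{k2^k}$. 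We then gather everything multiplying $\sum\frac{1}{k2^k}$. The coefficients $-n2^{n-2}$ and $+2^{n-2}$ combine to $-(n-1)2^{n-2}$. The stray $k=n$ contributions, namely $\frac{2^{n-2}}{n2^n}=\frac{1}{4n}$ and $-\frac{1}{4n}$, cancel. This leaves exactly the claimed expression.

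\emph{Third identity.} Put $\mu=-1$, $\lambda=1$ in \eqref{bin_fl_har2}, so that $\lambda+\mu=0$ and $\lambda-\mu=2$. For $n\ge2$ the leading term $\frac{n\mu}{2}0^{n-1}(\cdots)$ vanishes. The term $-\frac{\lambda}{2}(1-0^{n-1})$ equals $-\frac12$. The $H_n$ term is $-\frac14(0-2^n)H_n=2^{n-2}H_n$. The last sum is $\frac14\sum_{k=1}^n\frac1k\bigl(0^{n-k}-2^{n-k}\bigr)=\frac{1}{4n}-2^{n-2}\sum_{k=1}^n\frac{1}{k2^k}$. Splitting off the $k=n$ term of this sum gives $-\frac{1}{4n}$, which cancels the $\frac{1}{4n}$. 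The result is $2^{n-2}\bigl(H_n-\sum_{k=1}^{n-1}\frac{1}{k2^k}\bigr)-\frac12$.

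This does not match the stated $+\frac{1}{2(n-1)}$. A quick check at $n=2$ settles which is right. The left side is $\binom22\cdot1\cdot H_2=\frac32$. Our expression gives $\frac32-\frac12-\frac12=\frac12$, while the stated form gives $\frac32-\frac12+\frac12=\frac32$. So the statement's constant reproduces the correct value at $n=2$ and our computation is off by one. The obstacle in this part is therefore to locate the slip in the $0^{n-1}$, $0^{n-k}$ bookkeeping, including where $0^0=1$ enters and whether the constant term should be $-\frac{\lambda}{2}(\lambda^{n-1}-(\lambda+\mu)^{n-1})$ or something involving $\mu$. We will resolve this by also specializing \eqref{bin_fl_har} at $\mu=-1$, $\lambda=1$ and cross-checking the two results for small $n$.
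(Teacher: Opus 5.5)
Your route is the paper's: specialize \eqref{bin_fl_har} and \eqref{bin_fl_har2}. The paper also remarks that the third identity follows directly from \eqref{bin_fl_har2}, which is what you attempt. Your derivation of the second identity is correct.

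As written, however, the proposal does not prove the third identity. You arrive at the wrong constant $-\frac12$ and leave the error unlocated. The term you suspect, $-\frac{\lambda}{2}\bigl(\lambda^{n-1}-(\lambda+\mu)^{n-1}\bigr)$, is in fact fine. The slip is the claim that the leading term $\frac{n\mu}{2}0^{n-1}(\cdots)$ vanishes. In \eqref{bin_fl_har2} the factor $(\lambda+\mu)^{n-1}$ multiplies $\sum_{k=1}^{n-1}\frac1k\bigl(\frac{\lambda}{\lambda+\mu}\bigr)^k$, which is undefined at $\lambda+\mu=0$. The product has to be read as the polynomial $\sum_{k=1}^{n-1}\frac{\lambda^k(\lambda+\mu)^{n-1-k}}{k}$, which is the form in which it arises in the proof. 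At $\mu=-1$, $\lambda=1$ the $k=n-1$ summand survives through $0^0=1$ and equals $\frac{1}{n-1}$, so for $n\ge2$
\[
\frac{n\mu}{2}\Bigl(0\cdot H_n-\frac{1}{n-1}\Bigr)=\frac{n}{2(n-1)}.
\]
Adding your $-\frac12$ gives $\frac{n}{2(n-1)}-\frac12=\frac{1}{2(n-1)}$, exactly the stated constant. The rest of your computation (the $2^{n-2}H_n$ term and the cancelling $\pm\frac{1}{4n}$) is right.

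Your fallback of cross-checking \eqref{bin_fl_har} at small $n$ would not by itself be a proof. That specialization, which is the paper's primary route, gives $2^{n-2}H_n-\sum_{k=1}^{n-1}2^{k-2}H_{n-1-k}-\frac12H_{n-2}$. You would still need the convolution $\sum_{k=0}^{m}2^{k-2}H_{m-k}=2^{m-1}\sum_{k=1}^{m}\frac{1}{k2^k}-\frac14H_m$ with $m=n-1$ to reach the stated form. (The paper's printed intermediate agrees with this only if its sum is started at $k=0$; as printed it is off by $\frac14H_{n-1}$.)

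For the first identity you stopped just short, but the regrouping is immediate and needs no harmonic recurrence. Write $(1+k)2^{k-2}=2^{k-1}+(k-1)2^{k-2}$. Drop the $k=n-1$ term of the first piece (it carries $H_0=0$) and the $k=1$ term of the second (coefficient $0$), then shift $k\to k+1$ in the second piece. This yields exactly $\sum_{k=1}^{n-2}2^{k-1}\bigl(H_{n-1-k}+kH_{n-2-k}\bigr)$.
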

\begin{proof}
The first sum is a special case of \eqref{bin_fl_har} for $\mu=\lambda=1$. The equivalent form is obtained from \eqref{bin_fl_har2} for the same values of $\mu$ and $\lambda$. Setting $\mu=-1$ and $\lambda=1$ in \eqref{bin_fl_har} produces
$$\sum_{k=0}^{n} \binom{n}{k} \lrf{\frac{k}{2}} (-1)^k H_{k} = \frac{1}{4} \left ( 2^n H_n + H_{n-1} - 2 H_{n-2} \right )
- \sum_{k=1}^{n-1} 2^{k-2} H_{n-1-k},$$
and the result follows immediately by applying convolution
$$\sum_{k=0}^n 2^{k-2} H_{n-k} = 2^{n-1} \sum_{k=1}^n \frac{1}{k 2^k} - \frac{1}{4} H_n.$$ It follows also directly from identity 
\eqref{bin_fl_har2}.
\end{proof}

\begin{remark}
The above results can be compared with the classical sums \cite[Eqs. (3.24) and~(3.31)]{Boyadzhiev}
$$\sum_{k=0}^{n} \binom{n}{k} H_{k} = 2^{n} \left ( H_n -\sum_{k=1}^{n} \frac{1}{k 2^k} \right ) \quad\text{and}\quad 
\sum_{k=0}^{n} \binom{n}{k} (-1)^k H_{k} = - \frac{1}{n}.$$
\end{remark}

\begin{corollary}
For $n\geq 1$ we have
\begin{align}
\sum_{k=0}^{n} \binom{n}{k} \lrf{\frac{k}{2}} (-1)^{n-k} 2^k H_{k} &= 2n O_{\lrf{n/2}} - \frac{1}{2} O_{\lrf{(n+1)/2}} + \frac{1+(-1)^{n-1}}{2} \nonumber \\
&\qquad + \frac{(-1)^n}{4} 3^{n} \left ( H_n -\sum_{k=1}^{n} \frac{1}{k 3^k} \right ),
\end{align}
where $O_n$ are odd harmonic numbers, $O_n=\sum_{j=1}^n 1/(2j-1).$
\end{corollary}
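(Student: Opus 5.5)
The plan is to specialize \eqref{bin_fl_har2} to $\mu=2$ and $\lambda=-1$. The left side then becomes exactly $\sum_{k}\binom{n}{k}\lrf{k/2}(-1)^{n-k}2^kH_k$. The choice is good because $\lambda+\mu=1$ collapses the powers $(\lambda+\mu)^{\cdot}$, and $\lambda-\mu=-3$ produces the $3^n$ term in the statement. After substituting, I will simplify the four groups of terms one at a time.

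\emph{First group.} It becomes
$$n\Bigl(H_n-\sum_{k=1}^{n-1}\frac{(-1)^k}{k}\Bigr)=n\bigl(H_n+A_{n-1}\bigr),$$
where $A_m=\sum_{k=1}^m(-1)^{k+1}/k$ is the alternating harmonic number.

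\emph{Second group.} It equals $-\tfrac{\lambda}{2}\bigl(\lambda^{n-1}-(\lambda+\mu)^{n-1}\bigr)=\tfrac12\bigl((-1)^{n-1}-1\bigr)$.

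\emph{Third group.} It is $-\tfrac14\bigl(1-(-3)^n\bigr)H_n$.

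\emph{Last sum.} It splits as
$$\frac14\sum_{k=1}^n\frac{(-1)^k}{k}-\frac{(-1)^n3^n}{4}\sum_{k=1}^n\frac{1}{k3^k}=-\frac14A_n-\frac{(-1)^n3^n}{4}\sum_{k=1}^n\frac{1}{k3^k}.$$
Combined with the $\tfrac{(-1)^n3^n}{4}H_n$ piece from the third group, this gives precisely the term $\tfrac{(-1)^n}{4}3^n\bigl(H_n-\sum_{k\le n}1/(k3^k)\bigr)$ of the statement.

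What remains is
$$n(H_n+A_{n-1})-\frac14(H_n+A_n)+\frac{(-1)^{n-1}-1}{2}.$$
The key elementary fact I will use is $H_m+A_m=2O_{\lceil m/2\rceil}$: the even reciprocals cancel and the odd ones double. Since $\lceil n/2\rceil=\lrf{(n+1)/2}$, this immediately turns $-\tfrac14(H_n+A_n)$ into $-\tfrac12O_{\lrf{(n+1)/2}}$. For the first piece I write $A_{n-1}=A_n+(-1)^n/n$, which gives $n(H_n+A_{n-1})=2nO_{\lceil n/2\rceil}+(-1)^n$.

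The only delicate step, which I expect to be the main obstacle, is matching the index $\lceil n/2\rceil$ with the $\lrf{n/2}$ in the statement, together with the constant term. I will split by parity.
\begin{itemize}
\item For even $n$ the two indices agree. The constants are $1+\tfrac12(-1-1)=0$, matching $\tfrac{1+(-1)^{n-1}}{2}=0$.
\item For odd $n$, $O_{(n+1)/2}=O_{(n-1)/2}+\tfrac1n$, so $2nO_{\lceil n/2\rceil}=2nO_{\lrf{n/2}}+2$. The constants are then $2-1+0=1$, matching $\tfrac{1+(-1)^{n-1}}{2}=1$.
\end{itemize}
This gives the claimed identity for all $n\ge1$. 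The case $n=1$ is covered by the conventions on empty sums, and I will sanity-check it directly: both sides equal $0$.
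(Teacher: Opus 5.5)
Your proof is correct and is essentially the paper's argument: specialize \eqref{bin_fl_har2} at $\mu=2$, $\lambda=-1$, then collapse $H_m+\overline{H}_m=2O_{\lrf{(m+1)/2}}$. The only difference is cosmetic. The paper writes your first group as $n(H_{n-1}+\overline{H}_{n-1})+1$, using $nH_n=nH_{n-1}+1$, and absorbs the $1$ into the constant. The same identity at $m=n-1$ then gives $2nO_{\lrf{n/2}}$ directly, so your parity split is not needed.
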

\begin{proof}
Setting $\mu=2$ and $\lambda=-1$ in \eqref{bin_fl_har2} results after some steps of simplifications in
\begin{align*}
\sum_{k=0}^{n} \binom{n}{k} \lrf{\frac{k}{2}} (-1)^{n-k} 2^k H_{k} &= n (H_{n-1}+\overline{H}_{n-1}) - \frac{1}{4}(H_{n}+\overline{H}_{n}) 
+ \frac{1+(-1)^{n-1}}{2} \\
&\qquad + \frac{(-1)^n}{4} 3^{n} \left ( H_n -\sum_{k=1}^{n} \frac{1}{k 3^k} \right ),
\end{align*}
where $\overline{H}_n$ are alternating harmonic numbers given by
$$\overline{H}_n = \sum_{j=1}^n \frac{(-1)^{j-1}}{j}, \qquad \overline{H}_0 = 0.$$
This is the stated identity as we have the relation $H_{n}+\overline{H}_{n}=2O_{\lrf{(n+1)/2}}$.
\end{proof}

\subsection{Applications involving central binomial coefficients}

If we consider the sequence $a_n = \binom{2n}{n}$, then we have 
\begin{equation}\label{cbc_gf}
f(z) = \sum_{n=0}^\infty \binom{2n}{n} z^n = \frac{1}{\sqrt{1-4z}}, \qquad |z|<1/4.
\end{equation}

\begin{proposition}
Let $\mu$ and $\lambda$ be two complex parameters. Then we have for all $n\geq 1$
\begin{align}\label{bin_fl_cbc}
&\sum_{k=0}^{n} \binom{n}{k} \binom{2k}{k} \lrf{\frac{k}{2}} \mu^{k} \lambda^{n-k} \nonumber \\
&\quad = \mu 2^{-2n+1} \sum_{k=0}^{n} \binom{2k}{k} \binom{2(n-k)}{n-k} (n-k) \lambda^{k} (\lambda+4\mu)^{n-1-k} \nonumber \\
&\qquad - 2^{-(2n+2)} \sum_{k=0}^{n} \binom{2k}{k} \binom{2(n-k)}{n-k} \lambda^{k} \left ( (\lambda+4\mu)^{n-k} - (\lambda-4\mu)^{n-k} \right ).
\end{align}
\end{proposition}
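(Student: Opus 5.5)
We apply Theorem \ref{main_thm_Euler} with $f(z)=(1-4z)^{-1/2}$ from \eqref{cbc_gf}. Then $f'(z)=2(1-4z)^{-3/2}$, and we compute the three terms on the right of \eqref{main_thm} explicitly. Put $w=\mu z/(1-\lambda z)$. Then $1-4w=\frac{1-(\lambda+4\mu)z}{1-\lambda z}$, so
\begin{equation*}
\frac{1}{1-\lambda z} f\left(\frac{\pm\mu z}{1-\lambda z}\right) = \frac{1}{\sqrt{1-\lambda z}\,\sqrt{1-(\lambda\pm 4\mu)z}}.
\end{equation*}
For the derivative term we get
\begin{equation*}
\frac{\mu z}{2(1-\lambda z)^2} f'(w) = \frac{\mu z}{\sqrt{1-\lambda z}\,\bigl(1-(\lambda+4\mu)z\bigr)^{3/2}}.
\end{equation*}
The whole right-hand side is thus a combination of products of two binomial-type series, and we only need to extract coefficients of $z^n$.

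For coefficient extraction we use $(1-az)^{-1/2}=\sum_{m\ge0}\binom{2m}{m}\frac{a^m}{4^m}z^m$. Differentiating gives
\begin{equation*}
\frac{2az}{(1-az)^{3/2}}=\sum_{m\ge0} m\binom{2m}{m}\frac{a^{m}}{4^{m}}\,z^m .
\end{equation*}
Hence
\begin{equation*}
\frac{z}{(1-(\lambda+4\mu)z)^{3/2}}=\sum_{m\ge1} \frac{m}{2}\binom{2m}{m}\frac{(\lambda+4\mu)^{m-1}}{4^m}z^m .
\end{equation*}
Here the factor $m$ makes the $m=0$ term vanish, which justifies the sum formally starting at $0$ in the statement. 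Multiplying by $\mu$ and by $(1-\lambda z)^{-1/2}=\sum_k\binom{2k}{k}\lambda^k4^{-k}z^k$, the Cauchy product gives as coefficient of $z^n$
\begin{equation*}
\frac{\mu}{2}\,4^{-n}\sum_{k=0}^n\binom{2k}{k}\binom{2(n-k)}{n-k}(n-k)\lambda^k(\lambda+4\mu)^{n-1-k}.
\end{equation*}
This equals $\mu 2^{-2n-1}\sum(\cdots)$.

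The other two terms each contribute $-\frac14$ and $+\frac14$ times $4^{-n}\sum_k\binom{2k}{k}\binom{2(n-k)}{n-k}\lambda^k(\lambda\pm4\mu)^{n-k}$. Their combination gives exactly the second line of \eqref{bin_fl_cbc}, with the factor $2^{-(2n+2)}$. The left side of \eqref{main_thm} has coefficient $S_n(\mu,\lambda,\binom{2n}{n})$, so comparing coefficients finishes the proof.

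The main obstacle is the bookkeeping of constants in the derivative term. My computation gives the prefactor $\mu 2^{-2n-1}$, while the statement has $\mu2^{-2n+1}$. I would recheck the derivative series: if it is instead written via $\frac{d}{dz}(1-az)^{-1/2}$ with $m\binom{2m}{m}4^{-m}$ normalized differently, the power of $2$ shifts. The alternative is to test the claim at $n=2$, $\lambda=0$, $\mu=1$. There the left side is $\binom42\cdot1=6$. The second line gives $-2^{-6}\cdot6\cdot(4^2-4^2)=0$ at the $k=0$ term, so the check pins down the correct power of $2$ in the first line. I would perform this sanity check and adjust the exponent accordingly before writing the final coefficient comparison. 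Convergence is not an issue, since everything is a formal power series identity, or holds for small $|z|$.
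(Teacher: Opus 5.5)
\textbf{Gap: a factor-of-$4$ error in the derivative series.} Your route is the paper's: apply Theorem~\ref{main_thm_Euler} to $f(z)=(1-4z)^{-1/2}$, rewrite the three terms as $\mu z(1-\lambda z)^{-1/2}(1-(\lambda+4\mu)z)^{-3/2}$ and $(1-\lambda z)^{-1/2}(1-(\lambda\pm4\mu)z)^{-1/2}$, then take Cauchy products. Those closed forms and your handling of the second line are correct.

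The error is in the derivative series. Since $\frac{d}{dz}(1-az)^{-1/2}=\frac{a}{2}(1-az)^{-3/2}$, applying $z\frac{d}{dz}$ to $\sum_m\binom{2m}{m}(a/4)^m z^m$ gives
\begin{equation*}
\frac{az}{2(1-az)^{3/2}}=\sum_{m\ge0} m\binom{2m}{m}\frac{a^m}{4^m}z^m ,
\end{equation*}
not $2az(1-az)^{-3/2}$. Consequently
\begin{equation*}
\frac{z}{(1-(\lambda+4\mu)z)^{3/2}}=\sum_{m\ge1}2m\binom{2m}{m}\frac{(\lambda+4\mu)^{m-1}}{4^m}z^m ,
\end{equation*}
rather than your $\frac{m}{2}$ version. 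The coefficient of $z^n$ in the derivative term is then
\begin{equation*}
2\mu\,4^{-n}\sum_{k}\binom{2k}{k}\binom{2(n-k)}{n-k}(n-k)\lambda^k(\lambda+4\mu)^{n-1-k},
\end{equation*}
which is exactly the stated prefactor $\mu 2^{-2n+1}$.

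As written, your proposal derives the false constant $\mu 2^{-2n-1}$ and leaves the conflict with the statement unresolved, so it does not prove the proposition. The sanity check you propose would have settled it. At $n=2$, $\lambda=0$, $\mu=1$ the left side is $6$ and the second line vanishes. The first sum reduces to its $k=0$ term $\binom{4}{2}\cdot 2\cdot 4=48$, so the prefactor must be $1/8=2^{-2n+1}$; your $2^{-5}$ would give $3/2$. More generally, at $\lambda=0$ the right side collapses to
\begin{equation*}
\binom{2n}{n}\mu^n\Bigl(\frac n2-\frac{1-(-1)^n}{4}\Bigr)=\binom{2n}{n}\mu^n\lrf{\frac n2},
\end{equation*}
confirming the statement as given. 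With the corrected series, the rest of your coefficient comparison goes through unchanged.
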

\begin{proof}
From \eqref{cbc_gf} we get
$$f'(z) = \frac{2}{(1-4z)^{3/2}}.$$
Thus,
\begin{equation*}
\frac{\mu z}{2(1-\lambda z)^2} f'\left (\frac{\mu z}{1-\lambda z} \right ) = \frac{\mu z}{\sqrt{1-\lambda z}(1-(\lambda+4\mu)z)^{3/2}},
\end{equation*}
and also
\begin{equation*}
\frac{1}{1-\lambda z} f\left (\frac{\mu z}{1-\lambda z} \right ) = \frac{1}{\sqrt{1-\lambda z}\sqrt{1-(\lambda+4\mu)z}},
\end{equation*}
as well as
\begin{equation*}
\frac{1}{1-\lambda z} f\left (\frac{-\mu z}{1-\lambda z} \right ) = \frac{1}{\sqrt{1-\lambda z}\sqrt{1-(\lambda-4\mu)z}}.
\end{equation*}
Switching to power series we obtain
\begin{align*}
&\sum_{n= 0}^\infty z^{n} \left (\sum_{k=0}^{n}\binom{n}{k}\binom{2k}{k} \lrf{\frac{k}{2}} \mu^{k} \lambda^{n-k} \right ) \nonumber \\
&\quad = \frac{\mu}{2} \left ( \sum_{n=0}^\infty \binom{2n}{n} \left (\frac{\lambda}{4}\right )^n z^n \right ) \left (\sum_{n=0}^\infty 
\binom{2n}{n} n \left (\frac{\lambda+4\mu}{4}\right )^{n-1} z^n \right ) \nonumber \\
&\qquad - \frac{1}{4} \left ( \sum_{n=0}^\infty \binom{2n}{n} \left (\frac{\lambda}{4}\right )^n z^n \right ) \left (\sum_{n=0}^\infty 
\binom{2n}{n} \left (\frac{\lambda+4\mu}{4}\right )^{n} z^n \right ) \nonumber \\
&\qquad + \frac{1}{4} \left ( \sum_{n=0}^\infty \binom{2n}{n} \left (\frac{\lambda}{4}\right )^n z^n \right ) \left (\sum_{n=0}^\infty 
\binom{2n}{n} \left (\frac{\lambda-4\mu}{4}\right )^{n} z^n \right ).
\end{align*}
The statement follows from Cauchy's product formula, after coefficient extraction and obvious rearrangement.
\end{proof}

\begin{corollary}
We have
\begin{equation}
\sum_{k=0}^{n} \binom{n}{k} \binom{2k}{k} \lrf{\frac{k}{2}} 2^{-2k} = 2^{-2(n+1)} \sum_{k=2}^n \binom{2k}{k} \binom{2(n-k)}{n-k} 2^{k}(k-1)  
\end{equation}
and
\begin{align}
&\sum_{k=0}^{n} \binom{n}{k} \binom{2k}{k} \lrf{\frac{k}{2}} (-1)^{k} 2^{-2k} \nonumber \\
&\qquad = 2^{-(n+2)} \sum_{k=0}^n \binom{2k}{k} \binom{2(n-k)}{n-k} 2^{-k} - 2^{-(2n+2)} \left (\binom{2n}{n} + 4\binom{2(n-1)}{n-1}\right ).
\end{align}
\end{corollary}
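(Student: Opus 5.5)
Both identities are direct specializations of Proposition \eqref{bin_fl_cbc}. The first corresponds to $\mu=1/4$, $\lambda=1$, and the second to $\mu=-1/4$, $\lambda=1$. In each case one of $\lambda\pm 4\mu$ is $0$, so one family of terms collapses via the convention $0^0=1$. The only work is to handle these degenerate powers carefully and then reindex with $j=n-k$.

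For the first identity, put $\mu=1/4$ and $\lambda=1$, so that $\lambda+4\mu=2$ and $\lambda-4\mu=0$. The first sum in \eqref{bin_fl_cbc} becomes $\tfrac14 2^{-2n+1}\sum_k \binom{2k}{k}\binom{2(n-k)}{n-k}(n-k)2^{n-1-k}$. After substituting $j=n-k$ this equals $2^{-2(n+1)}\sum_{j} \binom{2j}{j}\binom{2(n-j)}{n-j}\, j\,2^{j}$. The second sum becomes $-2^{-2(n+1)}\sum_j \binom{2j}{j}\binom{2(n-j)}{n-j}(2^j-0^j)$. Combining the two gives
$$2^{-2(n+1)}\Big(\sum_{j=0}^n \binom{2j}{j}\binom{2(n-j)}{n-j}(j-1)2^j + \binom{2n}{n}\Big).$$
The term $j=0$ contributes $-\binom{2n}{n}$, which cancels the $0^0$ contribution. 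The term $j=1$ vanishes because of the factor $j-1$. What remains is the stated sum from $j=2$ (renaming $j$ as $k$).

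For the second identity, put $\mu=-1/4$ and $\lambda=1$, so that $\lambda+4\mu=0$ and $\lambda-4\mu=2$. In the first sum the factor $(n-k)0^{n-1-k}$ is nonzero only for $k=n-1$. This leaves $-\tfrac14 2^{-2n+1}\binom{2(n-1)}{n-1}\binom{2}{1} = -2^{-(2n+2)}\cdot 4\binom{2(n-1)}{n-1}$. In the second sum, $0^{n-k}$ survives only at $k=n$, which gives $-2^{-(2n+2)}\binom{2n}{n}$. The remaining part is $+2^{-(2n+2)}\sum_k\binom{2k}{k}\binom{2(n-k)}{n-k}2^{n-k}$. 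Reindexing with $j=n-k$ turns this into $2^{-(n+2)}\sum_j \binom{2j}{j}\binom{2(n-j)}{n-j}2^{-j}$, which is the claimed right-hand side.

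The only delicate point is the bookkeeping of the $0^0$ terms and the boundary indices. In particular one must check that the $j=0$ cancellation in the first identity is exact. One must also remember that these formulas require $n\ge 1$, so that the index $k=n-1$ exists. The second formula should additionally be checked for small $n$, say $n=1$, where both sides are $0$.
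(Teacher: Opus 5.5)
Your proposal is correct and is the paper's argument: the paper sets $\lambda=4\mu$ and $\lambda=-4\mu$ in \eqref{bin_fl_cbc} and simplifies, which after normalizing is your choice $\mu=\pm\tfrac14$, $\lambda=1$, and your bookkeeping of the $0^0$ terms and boundary indices is right. One cosmetic slip: in the second identity you do not need to reindex, you simply pull $2^n$ out of $2^{n-k}$ (the substitution $j=n-k$ would instead give $2^{-(2n+2)}\sum_j\binom{2j}{j}\binom{2(n-j)}{n-j}2^{j}$, which equals the target only by the symmetry $j\leftrightarrow n-j$ of $\binom{2j}{j}\binom{2(n-j)}{n-j}$).
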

\begin{proof}
Set $\lambda=4\mu$ and $\lambda=-4\mu$, in turn, in \eqref{bin_fl_cbc} and simplify.
\end{proof}

Corollary \ref{Euler_rel} can be also applied to central binomial coefficients. To ensure the existence of the necessary quantities, 
we consider the scaled sequence $a_n=\binom{2n}{n} 2^{-2n} p^{-n}$, where $p$ is a real (complex) parameter with $|p|\geq 2$.

\begin{proposition}
We have
\begin{equation}\label{p_series}
\sum_{k=0}^{\infty} \binom{2k}{k} \lrf{\frac{k}{2}} 2^{-2k} p^{-k} 
= \frac{1}{4} \left ( \frac{1}{p}\left (\frac{p}{p-1}\right )^{3/2} + \frac{(\sqrt{p-1}-\sqrt{p+1})\sqrt{p}}{\sqrt{(p-1)(p+1)}} \right ).
\end{equation}
In particular, we have the evaluations
\begin{equation}
\sum_{k=0}^{\infty} \binom{2k}{k} \lrf{\frac{k}{2}} 2^{-3k} = \frac{1}{2\sqrt{2}\sqrt{3}},
\end{equation}
and
\begin{equation}
\sum_{k=0}^{\infty} \binom{2k}{k} \lrf{\frac{k}{2}} (-1)^k 2^{-3k} = \frac{1}{2\sqrt{2}} - \frac{\sqrt{2}}{3\sqrt{3}}.
\end{equation}
\end{proposition}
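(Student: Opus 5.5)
The plan is to apply Corollary \ref{Euler_rel} directly to the sequence $a_n=\binom{2n}{n}2^{-2n}p^{-n}$; no new transformation is needed. By \eqref{cbc_gf} with $z$ replaced by $z/(4p)$, its generating function is
$$f(z)=\sum_{n=0}^\infty \binom{2n}{n}\left(\frac{z}{4p}\right)^n=\frac{1}{\sqrt{1-z/p}}=\sqrt{\frac{p}{p-z}},$$
with radius of convergence $|p|\ge 2>1$. Hence $f(1)$, $f(-1)$ and $f'(1)$ all exist, given by absolutely convergent series, so the hypotheses of Corollary \ref{Euler_rel} hold. The terms with $k=0,1$ vanish because $\lrf{k/2}=0$, so the sum in \eqref{p_series} is exactly the left-hand side of \eqref{Euler2}.

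Next I compute the two ingredients of \eqref{Euler2}. Differentiating gives
$$f'(z)=\frac{1}{2p}\left(1-\frac{z}{p}\right)^{-3/2}, \qquad\text{so}\qquad f'(1)=\frac{1}{2p}\left(\frac{p}{p-1}\right)^{3/2}.$$
For the odd-indexed part I use $\sum_{n\ge0}a_{2n+1}=\frac{1}{2}\bigl(f(1)-f(-1)\bigr)$, which gives
$$\frac{1}{2}\left(\sqrt{\frac{p}{p-1}}-\sqrt{\frac{p}{p+1}}\right).$$
Inserting both into \eqref{Euler2} yields
$$\frac{1}{4p}\left(\frac{p}{p-1}\right)^{3/2}-\frac{1}{4}\left(\sqrt{\frac{p}{p-1}}-\sqrt{\frac{p}{p+1}}\right).$$
Putting the bracket over the common denominator $\sqrt{(p-1)(p+1)}$ gives $\frac{\sqrt p(\sqrt{p-1}-\sqrt{p+1})}{\sqrt{(p-1)(p+1)}}$ with the sign shown in \eqref{p_series}.

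The two evaluations then follow by substitution. For $p=2$ the expression is $\frac{1}{8}2^{3/2}+\frac{1}{4}\bigl(\sqrt{2/3}-\sqrt2\bigr)=\frac{1}{2\sqrt6}$, as claimed. For $p=-2$ one has $p/(p-1)=2/3$ and $p/(p+1)=2$, and simplifying gives the stated alternating value.

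The only delicate point, and the one I expect to be the main obstacle, is the choice of square-root branches when $p$ is negative or complex. There the split forms $\sqrt p\,\sqrt{p\pm1}$ are ambiguous, so I will evaluate the formula through the unambiguous principal values $\sqrt{p/(p\mp1)}$, which are what the power series actually produces. For $p=-2$ these are $\sqrt{2/3}$ and $\sqrt2$, and I will check that the final expression, written in these terms, agrees with the claimed value.
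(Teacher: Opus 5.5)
Your proposal is correct and follows essentially the paper's proof: apply Corollary~\ref{Euler_rel} to $f(z)=\sqrt{p/(p-z)}$, using $f'(1)=\frac{1}{2p}\left(\frac{p}{p-1}\right)^{3/2}$ and $\sum_{n\ge0}a_{2n+1}=\frac12\bigl(f(1)-f(-1)\bigr)$, then specialize to $p=\pm2$. Your caution about branches at $p=-2$ is justified: with principal square roots, $\sqrt{(p-1)(p+1)}=\sqrt3$ there, whereas $\sqrt{p-1}\,\sqrt{p+1}=-\sqrt3$, so the split form in \eqref{p_series} read literally flips the sign of its second term. Your evaluation through $\sqrt{2/3}$ and $\sqrt2$ gives exactly $\frac{\sqrt2}{4}-\frac{\sqrt2}{3\sqrt3}=\frac{1}{2\sqrt2}-\frac{\sqrt2}{3\sqrt3}$.
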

\begin{proof}
For the scaled sequence $a_n=\binom{2n}{n} 2^{-2n} p^{-n}$ we have from \eqref{cbc_gf}
$$f(z) = \sqrt{\frac{p}{p-z}},\qquad |z|<|p|.$$
Hence,
$$f(1) = \sqrt{\frac{p}{p-1}} \qquad\text{and}\qquad f'(1) = \frac{1}{2p}\left (\frac{p}{p-1}\right )^{3/2}.$$
Finally, 
\begin{align*}
\sum_{n=0}^\infty a_{2n+1} &= \sum_{n=0}^\infty \binom{2(2n+1)}{2n+1} 2^{-2(2n+1)} p^{-(2n+1)} \\
&= - \frac{\sqrt{\frac{p-1}{p}}-\sqrt{\frac{p+1}{p}}}{2 \sqrt{\frac{p-1}{p}}\sqrt{\frac{p+1}{p}}} 
= \frac{(\sqrt{p+1}-\sqrt{p-1})\sqrt{p}}{2\sqrt{(p-1)(p+1)}},
\end{align*}
and the statement follows from Corollary \ref{Euler_rel}. The particular sums are special cases of \eqref{p_series} for $p=\pm 2$.
\end{proof}

\subsection{Series involving zeta function}

In this part, we heavily relay on the book by Srivastava and Choi \cite{Srivastava} for all relevant results involving 
the Riemann zeta function $\zeta(z) = \sum_{k=1}^\infty \frac{1}{k^z}, \Re(z)>1.$ In addition, we will apply properties 
of the digamma function $\psi(z), z\in\mathbb{C}$, which is the first logarithmic derivative of the Gamma function, i.e., 
$\psi(z) = (\ln \Gamma(z))' = \Gamma'(z)/\Gamma(z),$ where $\Gamma(z)$ is the complex gamma function defined for $\Re(z)>0$ 
by the integral
$$\Gamma (z) = \int_0^\infty e^{- t} t^{z - 1}dt,$$
and is extended to the rest of the complex plane, excluding the non-positive integers, by analytic continuation. The constant
$$\gamma = -\psi(1) = \lim_{n\rightarrow \infty} \left (H_n - \ln(n) \right ) \approx 0.57721566490\ldots,$$
is the familiar Euler-Mascheroni constant.

\begin{proposition}
We have
\begin{equation}\label{zeta1}
\sum_{k=2}^{\infty} \lrf{\frac{k}{2}} \frac{\zeta(k)}{2^k} = \frac{\pi^2+4}{16},
\end{equation}
and
\begin{equation}\label{zeta2}
\sum_{k=2}^{\infty} \lrf{\frac{k}{2}} \frac{\zeta(k)}{2^{2k}} 
= \frac{1}{4}\left (1 + \frac{\pi^2}{8} - \frac{\pi}{4} - G \right ),
\end{equation}
where $G$ is the Catalan constant,
$$G = \sum_{k=0}^\infty \frac{(-1)^k}{(2k+1)^2}.$$
\end{proposition}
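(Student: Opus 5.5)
The plan is to apply Corollary \ref{Euler_rel} to the sequence $a_0=a_1=0$, $a_k=\zeta(k)x^k$ for $k\geq 2$, with $x=1/2$ for \eqref{zeta1} and $x=1/4$ for \eqref{zeta2}. Both reduce to values of $\psi$ and $\psi'$ at rational points.

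The generating function comes from the classical expansion (Srivastava and Choi)
$$g(x):=\sum_{k=2}^\infty \zeta(k)x^k = -x\bigl(\gamma+\psi(1-x)\bigr),\qquad |x|<1.$$
In the notation of Corollary \ref{Euler_rel}, $f(z)=g(xz)$. This is holomorphic for $|z|<1/x$, which includes $z=\pm 1$ because $x\le 1/2$, so the hypotheses of the corollary hold. The corollary then gives
$$\sum_{k=2}^\infty \lrf{\frac{k}{2}}\zeta(k)x^k=\frac12\Bigl(x\,g'(x)-\frac{g(x)-g(-x)}{2}\Bigr).$$

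The two ingredients are computed as follows.
\begin{itemize}
\item First, $g'(x)=-\gamma-\psi(1-x)+x\psi'(1-x)$.
\item Second, the odd part is $\frac{g(x)-g(-x)}{2}=-\gamma x-\frac{x}{2}\bigl(\psi(1-x)+\psi(1+x)\bigr)$.
\item To evaluate $\psi(1+x)$ we use the recurrence $\psi(1+x)=\psi(x)+1/x$. With $1-x$ and $x$ both in $\{1/2\}$ or in $\{1/4,3/4\}$, everything reduces to $\psi$ at $1/2,1/4,3/4$.
\end{itemize}

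For $x=1/2$ we use $\psi(1/2)=-\gamma-2\ln 2$, $\psi(3/2)=\psi(1/2)+2$ and $\psi'(1/2)=\pi^2/2$. These give $x g'(x)=\ln 2+\pi^2/8$. The odd part equals $\ln2-\frac12$, with the $\gamma$ terms cancelling. Hence the sum is $\frac12(\pi^2/8+1/2)=(\pi^2+4)/16$.

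For $x=1/4$ we need $\psi(3/4)=-\gamma-3\ln2+\pi/2$, $\psi(5/4)=\psi(1/4)+4$ with $\psi(1/4)=-\gamma-3\ln2-\pi/2$, and $\psi'(3/4)=\pi^2-8G$. A preliminary check gives $x g'(x)=\frac34\ln2-\frac{\pi}{8}+\frac{\pi^2}{16}-\frac G2$, and the odd part equals $\frac34\ln 2-\frac12$. The $\ln 2$ terms cancel, and halving gives $\frac14\bigl(1+\frac{\pi^2}{8}-\frac{\pi}{4}-G\bigr)$.

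The main obstacle is bookkeeping rather than conception. We must justify the generating function for $\zeta(k)$ with the correct sign and domain. We must also recall the special values of $\psi$ and $\psi'$ at quarter-integers, in particular $\psi'(3/4)=\pi^2-8G$, which follows from the reflection formula $\psi'(1/4)+\psi'(3/4)=2\pi^2$ together with $\psi'(1/4)=\pi^2+8G$. I would cite these from Srivastava and Choi and then verify that the $\gamma$ and $\ln 2$ contributions cancel as expected.
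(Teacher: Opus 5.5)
Your proposal is correct and follows the paper's route: apply Corollary~\ref{Euler_rel} to $a_k=\zeta(k)x^k$ using the digamma generating function, get $f'(1)$ from $\psi$ and $\psi'$ at $1/2$ and $3/4$, and reduce the odd part to digamma values. Your computations match the paper's intermediate values ($\ln 2+\pi^2/8$, $\ln 2-\tfrac12$, $\tfrac34\ln 2-\tfrac{\pi}{8}+\tfrac{\pi^2}{16}-\tfrac{G}{2}$, $\tfrac34\ln 2-\tfrac12$), and the differences are only cosmetic: you evaluate the odd part for $x=1/2$ directly instead of citing Srivastava--Choi's tabulated series, and you use the recurrence $\psi(1+x)=\psi(x)+1/x$ rather than the combined functional equation.
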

\begin{proof}
We work with \cite[page 160]{Srivastava}
\begin{equation}\label{zeta_gf1}
\sum_{n=2}^\infty \zeta(n)\,z^n = -z (\psi(1-z)+\gamma), \qquad |z|<1.
\end{equation}
We first prove \eqref{zeta1} applying Corollary \ref{Euler_rel}. We set $a_0=a_1=0$ and for $n\geq 2, a_n=\zeta(n)/2^n$.
Then, from \eqref{zeta_gf1}, we see that
$$f(z) = - \frac{z}{2} \left (\psi \left (1-\frac{z}{2}\right )+\gamma \right ), \qquad |z|<2,$$
and
\begin{align*}
f'(1) &= - \frac{1}{2} \left (\psi \left (\frac{1}{2}\right ) + \gamma \right ) + \frac{1}{4} \psi'\left (\frac{1}{2}\right ) \\ 
&= - \frac{1}{2} \left (-\gamma - 2\ln(2) + \gamma \right ) + \frac{\pi^2}{8} \\
&= \ln(2) + \frac{\pi^2}{8}.
\end{align*}
Also,
$$\sum_{n=0}^\infty a_{2n+1} = \sum_{n=1}^\infty \frac{\zeta(2n+1)}{2^{2n+1}} = \frac{1}{2} \sum_{n=1}^\infty \frac{\zeta(2n+1)}{2^{2n}} 
= \ln(2) - \frac{1}{2},$$
where we have used \cite[Equation (46), p. 163]{Srivastava}. This proves \eqref{zeta1}. To establish \eqref{zeta2}, we set $a_0=a_1=0$ 
and for $n\geq 2, a_n=\zeta(n)/2^{2n}$.
Then, again from \eqref{zeta_gf1}, we see that
$$f(z) = - \frac{z}{4} \left (\psi \left (1-\frac{z}{4}\right )+\gamma \right ), \qquad |z|<4.$$
For the derivative $f'(1)$ we get
\begin{align*}
f'(1) &= - \frac{1}{4} \left (\psi \left (\frac{3}{4}\right ) + \gamma \right ) + \frac{1}{16} \psi'\left (\frac{3}{4}\right ) \\ 
&= - \frac{1}{4} \left (- \gamma + \frac{\pi}{2} - 3\ln(2) + \gamma \right ) + \frac{1}{16}\left ( \pi^2 - 8 G \right ) \\
&= - \frac{\pi}{8} + \frac{3}{4}\ln(2) + \frac{\pi^2}{16} - \frac{G}{2},
\end{align*}
where in the second step we have used an evaluation of $\psi'(3/4)$ due to K\"olbig \cite{Kolbig}. Next, we can use
$$\sum_{n=2}^\infty \zeta(2n+1) z^{2n} = - \frac{1}{2} \left ( \psi(1+z)+\psi(1-z) \right ) - \gamma, \quad |z|<1,$$
in conjunction with the functional equation of the digamma function
\begin{equation}\label{fkt_eq}
\psi(1+z) = \psi(1-z) + \frac{1}{z} - \pi \cot(\pi z),
\end{equation}
to evaluate
$$\sum_{n=0}^\infty a_{2n+1} = \frac{1}{4} \sum_{n=1}^\infty \frac{\zeta(2n+1)}{2^{4n}} = - \frac{1}{8} \left (\psi \left (\frac{5}{4}\right ) 
+ \psi \left (\frac{3}{4}\right ) \right ) - \frac{\gamma}{4} = - \frac{1}{2} + \frac{3}{4} \ln(2).$$
This completes the proof of \eqref{zeta2}.
\end{proof}

\begin{proposition}
We have
\begin{equation}\label{zeta3}
\sum_{k=2}^{\infty} \lrf{\frac{k}{2}} \frac{\zeta(k)}{k\,2^k} = \frac{\ln(2)+\gamma}{4},
\end{equation}
and
\begin{equation}\label{zeta4}
\sum_{k=2}^{\infty} \lrf{\frac{k}{2}} \frac{\zeta(k)}{k\,2^{2k}} 
= \frac{1}{4}\left ( 2\ln \Gamma \left(\frac{1}{4}\right ) + \frac{\gamma}{2} - \frac{\pi}{4} - \ln(2\pi) \right ).
\end{equation}
\end{proposition}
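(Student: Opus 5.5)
Both identities follow from Corollary \ref{Euler_rel}, exactly as in the proof of \eqref{zeta1} and \eqref{zeta2}. The only change is that the generating function \eqref{zeta_gf1} is replaced by its integrated form. Dividing \eqref{zeta_gf1} by $z$ and integrating (using $\ln\Gamma(1)=0$ and $\psi=(\ln\Gamma)'$) gives
\begin{equation*}
\sum_{n=2}^\infty \frac{\zeta(n)}{n}\,x^n = -\gamma x + \ln\Gamma(1-x), \qquad |x|<1 .
\end{equation*}

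For \eqref{zeta3} I set $a_0=a_1=0$ and $a_n=\zeta(n)/(n2^n)$ for $n\ge 2$. Then
\begin{equation*}
f(z)=-\frac{\gamma z}{2}+\ln\Gamma\left(1-\frac{z}{2}\right),
\end{equation*}
which is holomorphic for $|z|<2$, so $f(\pm1)$ and $f'(1)$ exist.

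\begin{itemize}
\item \emph{Derivative.} We have $f'(z)=-\frac{\gamma}{2}-\frac12\psi(1-\frac z2)$. With $\psi(1/2)=-\gamma-2\ln 2$ this gives $f'(1)=\ln 2$.
\item \emph{Odd part.} We have $\sum_{n\ge0}a_{2n+1}=\frac{f(1)-f(-1)}{2}=\frac12\left(-\gamma+\ln\Gamma(\tfrac12)-\ln\Gamma(\tfrac32)\right)$. Since $\Gamma(3/2)=\tfrac12\Gamma(1/2)$, this equals $\frac{\ln 2-\gamma}{2}$.
\item \emph{Conclusion.} Corollary \ref{Euler_rel} gives $\frac12\left(\ln 2-\frac{\ln2-\gamma}{2}\right)=\frac{\ln 2+\gamma}{4}$.
\end{itemize}

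For \eqref{zeta4} I take $a_n=\zeta(n)/(n2^{2n})$ for $n\ge2$ instead. Then
\begin{equation*}
f(z)=-\frac{\gamma z}{4}+\ln\Gamma\left(1-\frac z4\right),
\end{equation*}
which is holomorphic for $|z|<4$.

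\begin{itemize}
\item \emph{Derivative.} We have $f'(1)=-\frac{\gamma}{4}-\frac14\psi(\frac34)$. Using the value $\psi(3/4)=-\gamma+\frac{\pi}{2}-3\ln 2$ already used in the proof of \eqref{zeta2}, this gives $f'(1)=\frac{3\ln 2}{4}-\frac{\pi}{8}$.
\item \emph{Odd part.} We have $\sum a_{2n+1}=\frac12\left(-\frac{\gamma}{2}+\ln\Gamma(\tfrac34)-\ln\Gamma(\tfrac54)\right)$.
\end{itemize}

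This odd part is the only step needing care. Both terms must be expressed through $\Gamma(1/4)$ alone:
\begin{itemize}
\item the functional equation gives $\Gamma(5/4)=\tfrac14\Gamma(1/4)$;
\item the reflection formula gives $\Gamma(1/4)\Gamma(3/4)=\pi/\sin(\pi/4)=\pi\sqrt2$.
\end{itemize}
Hence $\ln\Gamma(\tfrac34)-\ln\Gamma(\tfrac54)=\ln(\pi\sqrt2)+2\ln2-2\ln\Gamma(\tfrac14)$.

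Substituting into \eqref{Euler2}, the $\ln 2$ and $\ln\pi$ terms should combine into $-\tfrac14\ln(2\pi)$. The final expression is then
\begin{equation*}
\frac14\left(2\ln\Gamma\left(\tfrac14\right)+\frac{\gamma}{2}-\frac{\pi}{4}-\ln(2\pi)\right).
\end{equation*}
I expect this bookkeeping of logarithms to be the main, though routine, obstacle. A quick consistency check is that all $\gamma$ contributions collapse to $\gamma/8$.
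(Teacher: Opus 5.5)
Your proposal is correct and follows essentially the paper's proof. You apply Corollary~\ref{Euler_rel} to $f(z)=\ln\Gamma(1-z/2)-\gamma z/2$ and $f(z)=\ln\Gamma(1-z/4)-\gamma z/4$, obtaining the same values $f'(1)=\ln 2$ and $f'(1)=\frac{3}{4}\ln 2-\frac{\pi}{8}$. The only difference is in the odd parts: you compute $\frac{f(1)-f(-1)}{2}$ directly via $\Gamma(3/2)=\frac{1}{2}\Gamma(1/2)$, $\Gamma(5/4)=\frac{1}{4}\Gamma(1/4)$ and the reflection formula, whereas the paper cites Srivastava--Choi Eqs.~(27) and (12), which encode the same odd part. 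The logarithmic bookkeeping you left as "should combine" does check out, giving $\frac{1}{2}\ln\Gamma(\frac{1}{4})+\frac{\gamma}{8}-\frac{\pi}{16}-\frac{1}{4}\ln(2\pi)$, which is the stated value.
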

\begin{proof}
Here, we work with \cite[page 160]{Srivastava}
\begin{equation}\label{zeta_gf2}
\sum_{n=2}^\infty \frac{\zeta(n)}{n}\,z^n = \ln \Gamma(1-z) - \gamma z, \qquad |z|<1.
\end{equation}
Setting $a_0=a_1=0$ and for $n\geq 2, a_n=\zeta(n)/(n\,2^n)$, \eqref{zeta_gf2} shows that
$$f(z) = \ln \Gamma\left (1-\frac{z}{2}\right ) - \frac{\gamma z}{2}, \qquad |z|<2.$$
From here, we calculate
$$f'(1) = - \frac{1}{2} \psi \left (\frac{1}{2}\right ) - \frac{\gamma}{2} = \ln(2)$$
and using \cite[Equation (27), p. 162]{Srivastava}
$$\sum_{n=0}^\infty a_{2n+1} = \frac{1}{2} \sum_{n=1}^\infty \frac{\zeta(2n+1)}{(2n+1)2^{2n}} = \frac{1}{2}\left (\ln(2)-\gamma \right ).$$
This yields \eqref{zeta3}. To prove \eqref{zeta4}, set $a_0=a_1=0$ and for $n\geq 2, a_n=\zeta(n)/(n\,2^{2n})$. Then
$$f'(1) = - \frac{1}{4} \psi \left (\frac{3}{4}\right ) - \frac{\gamma}{4} = \frac{3}{4} \ln(2) - \frac{\pi}{8}.$$
To finish the proof we apply \cite[Equation (12), p. 160]{Srivastava}
$$\sum_{n=1}^\infty \frac{\zeta(2n+1)}{2n+1} z^{2n+1} = \frac{1}{2} \ln \frac{\Gamma(1-z)}{\Gamma(1+z)} - \gamma z, \qquad |z|<1,$$
and calculate
$$\sum_{n=0}^\infty a_{2n+1} = \frac{1}{4} \sum_{n=1}^\infty \frac{\zeta(2n+1)}{(2n+1) 2^{4n}} = \frac{1}{2} \ln \frac{\Gamma(3/4)}{\Gamma(5/4)} - \frac{\gamma}{4} = \frac{1}{2} \ln \frac{4 \sqrt{2}\pi}{\Gamma^2(1/4)} - \frac{\gamma}{4}.$$
The identity \eqref{zeta4} follows from Corollary \ref{Euler_rel} after some simplifications.
\end{proof}

\begin{proposition}
We have
\begin{equation}\label{zeta5}
\sum_{k=2}^{\infty} \lrf{\frac{k}{2}} (\zeta(k)-1) = \frac{9 + 2\pi^2}{24},
\end{equation}
\begin{equation}\label{zeta6}
\sum_{k=2}^{\infty} \lrf{\frac{k}{2}} \frac{\zeta(k)-1}{k} = \frac{2\gamma + \ln(2)}{4},
\end{equation}
\begin{equation}\label{zeta7}
\sum_{k=2}^{\infty} \lrf{\frac{k}{2}} \frac{\zeta(k)-1}{k 2^k} = \frac{\gamma + \ln(6) - 2}{4},
\end{equation}
and
\begin{equation}\label{zeta8}
\sum_{k=2}^{\infty} \lrf{\frac{k}{2}} \frac{\zeta(k)-1}{k 2^{2k}} 
= \frac{1}{2}\left ( \ln \Gamma \left(\frac{1}{4}\right ) + \frac{\gamma}{4} + \arctanh \left(\frac{1}{4}\right ) - \frac{\pi}{8} - \frac{1}{3}- \frac{1}{2} \ln(2\pi) \right ).
\end{equation}
\end{proposition}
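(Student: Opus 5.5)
The plan is to apply Corollary~\ref{Euler_rel} four times. In each case I take $a_0=a_1=0$ and, for $n\geq 2$, one of
$$a_n=\zeta(n)-1,\qquad a_n=\frac{\zeta(n)-1}{n},\qquad a_n=\frac{\zeta(n)-1}{n\,2^n},\qquad a_n=\frac{\zeta(n)-1}{n\,2^{2n}}.$$
Since $\zeta(n)-1\sim 2^{-n}$, every generating function $f$ has radius of convergence at least $2$. So $f(1)$, $f(-1)$ and $f'(1)$ exist, and the corollary applies with no convergence issues.

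\emph{Generating functions.} I subtract the geometric and logarithmic series from \eqref{zeta_gf1} and \eqref{zeta_gf2}:
$$\sum_{n\ge2}(\zeta(n)-1)z^n=-z(\psi(1-z)+\gamma)-\frac{z^2}{1-z},\qquad \sum_{n\ge2}\frac{\zeta(n)-1}{n}z^n=\ln\Gamma(1-z)-\gamma z+\ln(1-z)+z.$$
The apparent singularity at $z=1$ is removed with the recurrences $\psi(1-z)=\psi(2-z)-\frac{1}{1-z}$ and $\ln\Gamma(1-z)+\ln(1-z)=\ln\Gamma(2-z)$. This gives
$$f(z)=-z(\psi(2-z)+\gamma)+z \qquad\text{and}\qquad f(z)=\ln\Gamma(2-z)-\gamma z+z,$$
and the cases \eqref{zeta7} and \eqref{zeta8} use the second function with $z$ replaced by $z/2$ and $z/4$.

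\emph{Computing $f'(1)$.} For \eqref{zeta5}, $f'(1)=-(\psi(1)+\gamma)+\psi'(1)+1=\frac{\pi^2}{6}+1$. For \eqref{zeta6}, $f'(1)=-\psi(1)-\gamma+1=1$. For \eqref{zeta7}, using $\psi(3/2)=2-\gamma-2\ln 2$, I get $f'(1)=-\tfrac12\psi(3/2)-\tfrac{\gamma}{2}+\tfrac12=\ln 2-\tfrac12$. For \eqref{zeta8}, I need $\psi(7/4)=\psi(3/4)+\frac43$ together with the value of $\psi(3/4)$ already used in the proof of \eqref{zeta2}.

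\emph{Odd-index sums.} For \eqref{zeta5} I use the classical $\sum_{n\ge1}(\zeta(2n+1)-1)=\frac14$. For \eqref{zeta6} I use $\sum_{n\ge1}\frac{\zeta(2n+1)-1}{2n+1}=1-\gamma-\frac{\ln 2}{2}$ from \cite{Srivastava}. For \eqref{zeta7} and \eqref{zeta8} I start from the formula \cite[Equation (12), p.~160]{Srivastava} for $\sum\frac{\zeta(2n+1)}{2n+1}z^{2n+1}$ and subtract $\sum_{n\ge1}\frac{z^{2n+1}}{2n+1}=\arctanh(z)-z$, evaluated at $z=\frac12$ and $z=\frac14$. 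This is where $\arctanh(1/4)$ enters \eqref{zeta8}, while at $z=\tfrac12$ one has $\arctanh(1/2)=\tfrac12\ln 3$, which produces the $\ln 6$ in \eqref{zeta7}. The values $\Gamma(1/2)/\Gamma(3/2)$, $\Gamma(3/4)/\Gamma(5/4)$ and the reflection formula $\Gamma(1/4)\Gamma(3/4)=\pi\sqrt2$ then turn the $\ln\Gamma$ terms into $\ln 2$ and $\ln\Gamma(1/4)$ respectively.

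\emph{Main obstacle.} For the first two identities a quick check already confirms the targets: $\frac12\bigl(\frac{\pi^2}{6}+1-\frac14\bigr)=\frac{9+2\pi^2}{24}$ and $\frac12\bigl(1-1+\gamma+\frac{\ln2}{2}\bigr)=\frac{2\gamma+\ln 2}{4}$. The main obstacle is \eqref{zeta8}. There the constants from $\psi(3/4)$ (namely $\pi/2$ and $3\ln2$), the Gamma reflection, $\arctanh(1/4)$ and the rational terms such as $\frac13$ must all combine correctly. I would carry out that bookkeeping last and check it numerically before finalizing.
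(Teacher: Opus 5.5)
Your proposal is correct and follows essentially the paper's route: Corollary~\ref{Euler_rel} applied with the generating function $\ln\Gamma(2-z)+z(1-\gamma)$ (the paper cites it directly from Srivastava--Choi, while you derive it from \eqref{zeta_gf2}), together with $\sum_{n\ge1}(\zeta(2n+1)-1)=\frac14$ and the digamma values $\psi(3/2)$ and $\psi(7/4)$. The \eqref{zeta8} bookkeeping you defer does close: $f'(1)=\frac34\ln 2-\frac{\pi}{8}-\frac1{12}$ and the odd-index sum $\frac12\ln\frac{4\sqrt2\,\pi}{\Gamma^2(1/4)}-\frac{\gamma}{4}-\arctanh(1/4)+\frac14$ give exactly the stated right-hand side after you halve their difference.
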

\begin{proof}
Work with \cite[Equation (135),page 173]{Srivastava}
$$\sum_{n=2}^\infty (\zeta(n)-1)\frac{z^n}{n} = \ln \Gamma(2-z) + z(1-\gamma), \qquad |z|<2,$$
in conjunction with \cite[Equation (194), p. 178]{Srivastava}
$$\sum_{n=1}^\infty (\zeta(2n+1)-1) = \frac{1}{4}.$$
Note that to finish the proofs of \eqref{zeta7} and \eqref{zeta8} we need the values $\psi(3/2)$ and $\psi(7/4)$. 
The calculations are done again using the functional equation of the digamma function \eqref{fkt_eq}.
\end{proof}

Many more series of this nature can be evaluated. We conclude with an alternating example (which is stated without proof).

\begin{proposition}
We have
\begin{equation}\label{zeta9}
\sum_{k=2}^{\infty} \lrf{\frac{k}{2}} (-1)^k \frac{\zeta(k)}{2^k} = \frac{\pi^2-4}{16}
\end{equation}
and hence
\begin{equation}\label{zeta10}
\sum_{k=1}^{\infty} \lrf{\frac{2k+1}{2}} \frac{\zeta(2k+1)}{2^{2k}} = \frac{1}{2}.
\end{equation}
\end{proposition}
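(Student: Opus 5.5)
Apply Corollary \ref{Euler_rel} to the sequence $a_0=a_1=0$ and $a_n=(-1)^n\zeta(n)/2^n$ for $n\geq 2$. By \eqref{zeta_gf1} with $z$ replaced by $-z/2$,
$$f(z)=\sum_{n=2}^\infty \zeta(n)\left(-\frac{z}{2}\right)^n=\frac{z}{2}\left(\psi\left(1+\frac{z}{2}\right)+\gamma\right),\qquad |z|<2,$$
so $f(1)$, $f(-1)$ and $f'(1)$ all exist.

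\emph{Derivative at $1$.} Differentiating gives
$$f'(z)=\frac12\left(\psi\left(1+\frac z2\right)+\gamma\right)+\frac z4\,\psi'\left(1+\frac z2\right).$$
We have $\psi(3/2)=\psi(1/2)+2=2-\gamma-2\ln 2$. Also $\psi'(3/2)=\psi'(1/2)-4=\pi^2/2-4$. Hence
$$f'(1)=1-\ln 2+\frac{\pi^2}{8}-1=\frac{\pi^2}{8}-\ln 2.$$

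\emph{Odd-index terms.} For odd $n$ we have $a_n=-\zeta(n)/2^n$. The same sum was computed in the proof of \eqref{zeta1}, giving
$$\sum_{n\ge0}a_{2n+1}=-\left(\ln 2-\tfrac12\right)=\tfrac12-\ln 2 .$$
Equivalently, $(f(1)-f(-1))/2$ can be evaluated with the functional equation \eqref{fkt_eq} at $z=1/2$.

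\emph{Conclusion for \eqref{zeta9}.} Corollary \ref{Euler_rel} now yields
$$\frac12\left(\frac{\pi^2}{8}-\ln 2-\frac12+\ln 2\right)=\frac{\pi^2-4}{16},$$
which is \eqref{zeta9}.

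\emph{Proof of \eqref{zeta10}.} Write $S_e$ and $S_o$ for the even-$k$ and odd-$k$ parts of the series in \eqref{zeta1}, where the coefficient of $\zeta(k)/2^k$ is $\lrf{k/2}$. Then \eqref{zeta1} gives $S_e+S_o=(\pi^2+4)/16$, and \eqref{zeta9} gives $S_e-S_o=(\pi^2-4)/16$. Subtracting, $S_o=1/4$. For $k=2m+1$ we have $\lrf{k/2}=m$ and $\zeta(k)/2^k=\tfrac12\,\zeta(2m+1)/2^{2m}$. Also $\lrf{(2m+1)/2}=m$, so
$$\sum_{m\ge1}\lrf{\tfrac{2m+1}{2}}\frac{\zeta(2m+1)}{2^{2m}}=2S_o=\frac12,$$
which is \eqref{zeta10}.

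\emph{Main obstacle.} The only delicate points are the digamma bookkeeping and the sign conventions. Specifically, one must check that $\psi'(1/2)=\pi^2/2$, and that the odd part inherits the minus sign from $(-1)^n$. Both are routine once recorded.
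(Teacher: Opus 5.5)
The paper states this proposition without proof. Your argument is the one its proof of \eqref{zeta1} points to: Corollary \ref{Euler_rel} applied to \eqref{zeta_gf1} evaluated at $-z/2$, followed by combining with \eqref{zeta1} to isolate the odd-index part, which is what the paper's ``and hence'' indicates. Your proposal is correct, and every value checks out: $f'(1)=\pi^2/8-\ln 2$, $\sum a_{2n+1}=\tfrac12-\ln 2$, and $S_o=1/4$.
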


\section{Concluding Comments}

This article is concerned with binomial transforms for sequences $(a_n)_{n\geq 0}$ weighted by the floor function.
The first main result presented in Theorem \ref{main_thm_Euler} is an Euler-type series representation that allows us to prove new combinatorial results for this transform. The second main result presented in Corollary \ref{Euler_rel} is another Euler-type formula 
that can be applied to infinite series. To keep the article readable, we have restricted the list of examples to three prominent classes. Obviously, the field of applications is much broader, and sums and series involving other famous sequences like Catalan numbers, Fibonacci numbers, Bernoulli numbers and others can be studied in the future.


\bigskip
\hrule
\bigskip

\noindent 2020 {\it Mathematics Subject Classification}:
Primary 05A15, 05A19, 11B83.

\noindent \emph{Keywords: }
Euler transform, floor function, binomial sum, Harmonic number, central binomial coefficient, Riemann zeta function.

\bigskip
\hrule
\bigskip







\begin{thebibliography}{13}

\bibitem{Adegoke}
K. Adegoke, Binomial transforms and the binomial convolution of sequences, Preprint, (2025), 84 pages. https://arxiv.org/pdf/2507.04179

\bibitem{Adegoke2}
K. Adegoke, R. Frontczak and T. Goy, Series and sums involving the floor function, {\it Montes Taurus J. Pure Appl. Math.} {\bf 8} (2026), 
1--26.

\bibitem{Boyadzhiev}
K. N. Boyadzhiev, {\it Notes on the Binomial Transform}, World Scientific, 2018.

\bibitem{Boyadzhiev2} 
K. N. Boyadzhiev, Harmonic number identities via Euler’s transform, {\it J. Integer Seq.} {\bf 12} (2009), Article 09.6.1.

\bibitem{Boyadzhiev3}
K. N. Boyadzhiev, Series transformation formulas of Euler type, Hadamard product of series, and harmonic number identities,
{\it Indian J. Pure Appl. Math.} {\bf 42} (2011), 371--386.

\bibitem{Boyadzhiev4}
K. N. Boyadzhiev and R. Frontczak, Hadamard product of series with special numbers, {\it Funct. Approx. Comment. Math.} {\bf 68} (2023), 
231--247.

\bibitem{Chen}
K. W. Chen, Identities from the binomial transform, {\it J. Number Theory} {\bf 124} (2007), 142--150.

\bibitem{Frontczak}
R. Frontczak, Harmonic sums via Euler’s transform: Complementing the approach of Boyadzhiev, {\it J. Integer Seq.} {\bf 23} (2020), 
Article 20.3.2.

\bibitem{Gould1}
H. W. Gould, Series transformations for finding recurrences for sequences, {\it Fibonacci Quart.} {\bf 28} (1990), 166--171.

\bibitem{Gould2}
H. W. Gould and J. Quaintance, Bernoulli numbers and a new binomial transform identity, {\it J. Integer Seq.} {\bf 17} (2014), Article 14.2.2.

\bibitem{Knopp}
K. Knopp, \textit{Theory and Application of Infinite Series}, Dover, New York, 1990.

\bibitem{Kolbig}
K. S. K\"{o}lbig, The polygamma function $\psi^{(k)}(x)$ for $x=1/4$ and $x=3/4$, {\it J. Comput. Appl. Math.} {\bf 75} (1996) 43--46.

\bibitem{Lewin}
L. Lewin, \textit{Polylogarithms and Associated Functions}, North-Holland, Amsterdam, 1981,

\bibitem{Prodinger}
H. Prodinger, Some information about the binomial transform, {\it Fibonacci Quart.} {\bf 32} (1994), 412--415.

\bibitem{Furdui}
A. S\^{i}nt\u{a}m\u{a}rian and O. Furdui, \textit{Sharpening Mathematical Analysis Skills}, Springer, Berlin/Heidelberg, 2021.

\bibitem{Somu}
S. T. Somu, J. Haw, V. Nguyen and D. V. K. Tran, On some series with gaps, {\it J. Math. Anal. Appl.} {\bf 528} (2023), 127479.

\bibitem{Sondow}
J. Sondow, Analytic continuation of Riemann's zeta function and values at negative integers via Euler's transformation of series, 
{\it Proc. Amer. Math. Soc.} {\bf 120} (1994), 421--424.

\bibitem{Spivey}
M. Z. Spivey, Combinatorial sums and finite differences, {\it Discrete Math.} {\bf 307} (2007), 3130--3146.

\bibitem{Srivastava}
H. M. Srivastava and J. Choi, Series Associated with the Zeta and Related Functions, 
Dordrecht, Boston and London: Kluwer Academic Publishers, 2001.

\bibitem{Sun1}
Z.-H. Sun, Self-inverse sequences under binomial transformation, {\it Fibonacci Quart.} {\bf 39} (2001), 324--333.

\bibitem{Sun2}
Z.-W. Sun, Combinatorial identities in dual sequences, {\it European J. Comb.} {\bf 24} (2003), 709--718.

\bibitem{Sun3}
Z.-H. Sun, Some further properties of even and odd sequences, {\it Int. J. Number Theory} {\bf 13} (2017), 1419--1442.

\end{thebibliography}
\end{document}